\newtheorem{theorem}{Theorem}
\newtheorem{lemma}{Lemma}
\theoremstyle{definition}
\newtheorem{example}{Example}
\theoremstyle{plain}
\newtheorem{proposition}{Proposition}
\theoremstyle{plain}
\def\Var{\mathrm{Var}}
\def\diag{\mathrm{diag}}
\def\T{T}
\def\diag{\mathrm{diag}}
\def\Var{\mathrm{Var}}
\title{Optimal Approximate Designs for Comparison with Control in Dose-Escalation Studies}
\author{Samuel Rosa and Radoslav Harman}
\date{\today}
\begin{document}
	
	\maketitle
	
	\section*{Abstract}
	
	Consider an experiment, where a new drug is tested for the first time on human subjects - healthy volunteers. Such experiments are often performed as dose-escalation studies: a set of increasing doses is pre-selected, individuals are grouped into cohorts, and in each cohort, the dose number $i$ can be administered only if the dose number $i-1$ has already been tested in the previous cohort. If an adverse effect of a dose is observed, the experiment stops and thus no subjects are exposed to higher doses. In this paper, we assume that the response is affected both by the dose or placebo effects as well as by the cohort effects. We provide optimal approximate designs for selected optimality criteria ($E$-, $MV$- and $LV$-optimality) for estimating the effects of the drug doses compared with the placebo. In particular, we obtain the optimality of Senn designs and extended Senn designs with respect to multiple criteria.
	\bigskip
	
	\emph{Keywords:} dose-escalation studies; approximate designs; comparison with control; optimal designs; block designs

\section{Introduction}

A dose-escalation study for a new drug consists of a series of trials, in which the drug is given to cohorts of individuals, one cohort at a time. The experimenters select a finite number of increasing doses of the drug that are to be tested and each subject is given either one of the doses or the placebo. 
In particular, the subjects in the first cohort may be given only the lowest dose or the placebo, the subjects in the second cohort only the two lowest doses or the placebo, etc. If an adverse effect is observed, the experiment stops; therefore, no subjects are exposed to higher doses. These studies are often performed when a new drug is tried on humans for the first time and thus the safety of the subjects is the main concern, especially if the drug is given to healthy volunteers - hence the slowly escalating doses.

\cite{Bailey09}, motivated by the failure of the Te Genero trial (see \cite{SennRSS}), considered designs for dose-escalation studies with quantitative responses. \cite{Bailey09} modelled the dose-escalation studies as blocking experiments with design constraints, where the blocks represented the cohorts, and the treatments were the placebo and the drug doses.  Some general principles that reduce variance were presented, as well as a wide range of well performing designs for estimating a system of all pairwise comparisons of treatments.
\cite{HainesClark} provided some numerical methods for constructing $A$-, $MV$-, $D$- and $E$-optimal designs in the setting proposed by \cite{Bailey09}.

In this paper, we adopt the model considered by \cite{Bailey09} and we study the optimal approximate designs. The approximate, or continuous, designs do not determine the actual numbers of subjects that are given the particular treatments; rather, they determine the proportions of all subjects that are allocated to each combination of a cohort and a treatment. Note that when a total number of subjects is selected, optimal or efficient exact designs can usually be constructed from the optimal approximate designs, e.g., see Chapter 12 in \cite{puk}.
\bigskip 

Senn in his commentary \cite{Senn09} argued that in each cohort the comparison of the latest dose with the placebo is relevant, and proposed a corresponding latest variance ($LV$) optimality criterion. This relevance, as noted by \cite{Senn09}, follows from the fact that the crucial decision whether to proceed to the next cohort or to stop the experiment is based on the effect of the latest dose relative to the placebo. Following the argued relevance of the comparisons with the placebo, we consider the problem of comparing doses of the drug with the placebo, unlike \cite{Bailey09}, who studied the set of all pairwise comparisons. Therefore, the system of contrasts studied here is, in fact, a comparison of test treatments (drug doses) with a control (placebo).

There is a large amount of literature on comparison with control in blocking experiments, beginning with \cite{BechhoferTamhane} (e.g., see \cite{MajumdarNotz}, \cite{Majumdar} and \cite{KunertMartin}). However, these standard results are not applicable in the model considered in this paper, due to the dose-escalation constraints.

We consider the $E$- and $MV$-optimality criteria, as well as the latest variance ($LV$) criterion proposed by \cite{Senn09}, and we obtain optimal designs for these criteria. In particular, the \emph{Senn designs} (which are designs that assign in each cohort half of the subjects to the placebo and the other half to the latest dose, see \cite{Bailey09}, \cite{Senn97}) stand out, because they are optimal with respect to all of the aforementioned criteria. Moreover, we obtain a complete characterization of the class of all $E$-optimal designs, which allows for selecting the best $E$-optimal design with respect to a secondary criterion chosen by the experimenter.


\subsection{Notation}

The symbols $1_n$ and $0_n$ denote the column vectors of length $n$ of ones and zeroes, respectively. Furthermore, $J_n := 1_n1_n^\T $ is the $n \times n$ matrix of ones, $0_{m \times n} := 0_m 0_n^\T $ is the $m \times n$ matrix of zeroes and $e_i$ is the $i$-th standard unit vector (the $i$-th column of the identity matrix $I_n$, where $n$ is the dimension of $e_i$). We use the symbols $\lambda_{\min}(A)$ and $\lambda_{\max}(A)$ to denote the smallest and the largest eigenvalue, respectively, of a symmetric matrix $A$.
We denote the column space of a matrix $A$ as $\mathcal{C}(A)$.

\section{The Model}

We consider the model formulated by \cite{Bailey09}. A new drug is to be tested on $N$ individuals split into $t$ cohorts. Each individual is given a treatment $i$, which is either the placebo (for $i=0$) or one of $n \geq 2$ increasing doses of the drug (for $i=1,\ldots,n$), i.e., $\mathrm{dose}_1 < \ldots < \mathrm{dose}_n$. We assume that the safety or tolerability outcome, or in general, the measured effects of the drug on the subjects, are quantitative. Furthermore, we assume additivity of treatment and cohort effects, i.e.,
\begin{equation}\label{eModelDE}
Y_j = \mu + \tau_{i(j)} + \theta_{k(j)} + \varepsilon_j, \quad j=1,\ldots,N,
\end{equation}
where $N$ is the total number of subjects, $Y_j$ is the measured safety or tolerability response for the $j$-th subject, $\mu$ is the overall mean, $i(j) \in \{0,\ldots,n\}$ is the treatment given to the $j$-th subject ($0$ denotes the placebo, $1, \ldots, n$ denote the escalating doses), $k(j) \in \{1,\ldots,t\}$ is the cohort in which the $j$-th subject is; $\tau_i$ is the effect of the $i$-th treatment, $\theta_k$ is the effect of the $k$-th cohort, and $\varepsilon_j$ is the random error. The errors $\{\varepsilon_j\}$ are assumed to have zero mean, variance $\sigma^2$ and they are assumed to be independent. Note that the model (\ref{eModelDE}) is in fact a blocking experiment, with cohorts as blocks, and the doses and the placebo as treatments. Adopting the notation of \cite{Bailey09}, we will consider two cases: standard designs, where $t=n$ and extended designs, where $t=n+1$.

Based on the relevance of comparing the latest dose with the placebo argued in \cite{Senn09}, we assume that the objective of the experiment is to compare the drug doses with the placebo; i.e., to estimate $\tau_1-\tau_0, \ldots, \tau_n-\tau_0$. 
\bigskip

Consider the general linear model
$$Y_j = f^\T (x_j) \beta + \varepsilon_j, \quad j=1,\ldots,N,$$
where $x_j \in \mathfrak{X}$, which is the finite set of all experimental conditions. Then, an \emph{approximate design} (from now on, simply a \emph{design}) $\xi$ is a mapping from $\mathfrak{X}$ to $[0,1]$, such that $\sum_{x \in \mathfrak{X}} \xi(x) =1$.  The moment matrix of a design $\xi$ is $M(\xi) = \sum_{x \in \mathfrak{X}} \xi(x)f(x)f^\T (x)$. A design $\xi$ is said to be feasible for a system $A^T\beta$ if $\mathcal{C}(A) \subseteq \mathcal{C}(M(\xi))$; in such a case, the system $A^T\beta$ is said to be estimable under $\xi$. The information matrix of a feasible design $\xi$ for estimating $A^\T \beta$ is $N_A(\xi) = (A^\T  M^-(\xi) A)^{-1}$, see \cite{puk}.

A feasible design $\xi^* \in \Xi$, where $\Xi$ is the set of all considered designs, is $\Phi$-optimal if it maximizes (alternatively, minimizes) a given function $\Phi(\xi)$ among all designs $\xi \in \Xi$. We will omit the argument $\xi$ in expressions, in which it is clear from the context.

In model (\ref{eModelDE}), we have $\mathfrak{X} = \{0,\ldots,n\} \times \{1,\ldots,t\}$, $f(i,k) = \big(e_{i+1}^\T , 1 , e_k^\T  \big)^\T $, $\beta = \big(\tau^\T , \mu, \theta^\T \big)^\T $, where $\tau =(\tau_0, \ldots, \tau_n)^\T $ and $\theta=(\theta_1, \ldots, \theta_t)^\T $. The objective of the experiment is to estimate  $\tau_1-\tau_0, \ldots, \tau_n-\tau_0$, which can be written as a system of contrasts $Q^\T  \tau$, where  $Q^\T =(-1_{n},I_{n})$, or, equivalently, $A^\T  \beta$, where $A^\T  = \big(Q^\T , 0_{n \times (t+1)} \big)$.

The value $\xi(i,k)$ determines the proportion of all subjects that are assigned to cohort $k$ and given the treatment $i$.
Let $r_i(\xi) := \sum_k \xi(i,k)$ be the total proportion of the trials with treatment $i$, $i=0,\ldots,n$, $s_k(\xi) :=\sum_i \xi(i,k)$ be the total proportion of the trials for cohort $k$, $k=1,\ldots,t$ and $r(\xi):=(r_0(\xi),\ldots,r_n(\xi))^\T $, $s(\xi):=(s_1(\xi),\ldots,s_t(\xi))^\T $. Then, $Nr_i$ is the replication number of the $i$-th treatment, and $Ns_k$ is the size of the $k$-th cohort. Thus, $r$ is the vector of relative replication numbers and $s$ is the vector of relative sizes of the cohorts.
The moment matrix of a design $\xi$ is
$$
M(\xi)=\begin{bmatrix}
M_{11} & M_{12} \\ M_{12}^\T  & M_{22}
\end{bmatrix},
$$
where $M_{11}=\diag(r)$; $M_{12}=\big(r, X\big)$, where $X=\big(\xi(i,k)\big)_{i,k}$; and
$$
M_{22} = \begin{bmatrix}
1 & s^\T  \\ s & \diag(s)
\end{bmatrix}.
$$
Note that $r=X1_t$ and $s=X^\T 1_{n+1}$.
\bigskip

In accordance with \cite{Bailey09}, we introduce some constraints on the set of considered designs $\Xi$. For each $k\in \{1,\ldots,n\}$, we have:
\begin{itemize}
	\item[(i)] $\xi(i,k)=0$ for $i>k$. In cohort $k$, the highest allowed dose is $k$.
	\item[(ii)] $\sum_i\xi(i,k)=1/t$.
\end{itemize}

The constraint (i) formalizes the gradual dose-escalation requirement. Furthermore, in (ii), each cohort is assumed to be of the same size, say $m$.
In the case of extended designs, the only condition for cohort $k=n+1$ is (ii), i.e., the condition (ii) is assumed for all $k \in \{1,\ldots,t\}$. In other words, cohort $n+1$ of extended designs is an additional cohort that is subject to no constraints other than the fixed cohort size.

From now on, we have $\Xi=\{\xi \text{ is feasible} \mid \xi \text{ satisfies (i), (ii)}\}$.
\cite{Bailey09} considered an additional constraint (iii) $\xi(k,k) \geq 1/N$, i.e., in each cohort, the highest allowed dose must be given to at least one subject. Since (iii) will naturally be satisfied by optimal designs, we disregard this constraint.
\bigskip

From condition (ii) follows that $s=1_t/t$; thus, the matrix $M_{22}$ is of rank $t$ and the Schur complement $M_\tau = M_{11} - M_{12}M_{22}^-M_{12}^\T $ of $M$ is
$$M_\tau = \diag(r)-tXX^\T, $$
which may be viewed as the information matrix of $\xi$, when all the treatments are of equal interest. For a general matrix $Q$ of full column rank, the system $Q^\T \tau$ is estimable under $\xi$ if and only if $\mathcal{C}(Q) \subseteq \mathcal{C}(M_\tau)$. The information matrix of a feasible design for estimating $Q^\T  \tau$ is given by $N_A(\xi) = (Q^\T M_\tau^-(\xi)Q)^{-1}$, e.g., see \cite{RosaHarman16}. The inverse of the information matrix $N_A(\xi)$ is proportional to the variance matrix of the least-squares estimator of $Q^T\tau$ under $\xi$; more precisely, $\Var_\xi(\widehat{Q^T\tau}) = N_A^{-1}(\xi) \sigma^2/N$, where $N$ is the total number of subjects.

It is easy to check that the Schur complement for a given design $\xi$, and thus the information matrix of $\xi$, would be the same if the constant term $\mu$ was not present in model (\ref{eModelDE}). It follows that our results hold even in a model without the constant term.

Recall that the experimental aim considered in this paper is the comparison of the drug doses with the placebo. For a feasible design $\xi$, the information matrix for comparing the doses with the placebo (i.e., comparing test treatments with a control) $N_A(\xi)$ is obtained by deleting the first row and column of $M_\tau$ (e.g., see \cite{BechhoferTamhane} or \cite{GiovagnoliWynn}).

Let us denote
\begin{equation}\label{eMtau}
M_\tau = \begin{bmatrix}
\alpha & b^\T  \\ b & C
\end{bmatrix}, \quad
X = \begin{bmatrix}
z^\T  \\ Z
\end{bmatrix}
\end{equation}
where $C$ is an $n \times n$ matrix and $Z$ is an $n \times t$ matrix. Then, 
\begin{equation}\label{eInfMat}
N_A(\xi) = C=\mathrm{diag}(r_1,\ldots,r_n) - tZZ^\T,
\end{equation}
where $Z=\big(\xi(i,k)\big)_{i=1,\ldots,n;k=1,\ldots,t}$.

\section{$E$-optimality}

\subsection{Standard Designs}

A design is $E$-optimal if it maximizes the minimum eigenvalue $\lambda_{\min}(N_A(\xi))$, which is equivalent to minimizing $\lambda_{\max}(N_A^{-1}(\xi))$ (e.g., \cite{puk}; \cite{AtkinsonEA07}).
In order to provide $E$-optimal designs, we characterize the $E$-optimal relative replication numbers (treatment proportions). From Theorem 1 and Theorem 6 by \cite{RosaHarman16}, we obtain the following lemma.

\begin{lemma}\label{lEopt}
	Let $\xi$ be an $E$-optimal design for comparing the drug doses with the placebo in model (\ref{eModelDE}) without design constraints. Then, $\xi$ satisfies $r_0(\xi) = 1/2$ and $r_i(\xi) = 1/(2n)$ for $i=1,\ldots,n$.
\end{lemma}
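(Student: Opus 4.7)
The plan is to frame Lemma~\ref{lEopt} as a direct specialisation of the general optimal-replication results in \cite{RosaHarman16}, so that the formula for $r$ is read off from their two theorems. First I would strip the escalation constraint (i) and the equi-size constraint (ii) from the model, which leaves an ordinary block design for the test-treatments-versus-control system $Q^\T\tau$ with $Q^\T=(-1_n,I_n)$ and information matrix $N_A(\xi)=C$ given by (\ref{eInfMat}). This is exactly the setting in which \cite{RosaHarman16} characterise optimal designs.

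Next I would invoke their Theorem~1, which in the unconstrained block-design setting decouples the optimisation into two layers: choose an optimal relative replication vector $r=(r_0,\ldots,r_n)^\T$ on the simplex, and then, given $r$, choose an optimal distribution of mass across the blocks. For criteria that are monotone functions of the eigenvalues of $N_A$ (and $E$-optimality is one), Theorem~1 shows that this second layer is solvable, and the outer problem reduces to optimising over $r$ alone. In particular, Lemma~\ref{lEopt} is equivalent to identifying the $E$-optimal treatment proportions $r$ for $Q^\T\tau$.

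I would then quote their Theorem~6, which provides a closed-form solution for the $E$-optimal $r$ when $Q$ is the test-treatment-versus-control contrast matrix. For this particular $Q$, that theorem yields the symmetric solution $r_0=1/2$ and $r_i=1/(2n)$ for $i=1,\ldots,n$, which is precisely the statement of Lemma~\ref{lEopt}. The proof is therefore essentially a translation of notation and a citation of these two results.

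The one step that needs care is the bookkeeping in that translation: one must verify that the information matrix $N_A(\xi)=\mathrm{diag}(r_1,\ldots,r_n)-tZZ^\T$ used here coincides with the Schur-complement formulation on which \cite{RosaHarman16} base Theorems~1 and~6, and that the $E$-criterion belongs to the class of criteria to which those theorems apply. I expect this to be the main, though essentially routine, obstacle; once the correspondence of notation is established, the claimed values of $r_0$ and $r_i$ follow immediately.
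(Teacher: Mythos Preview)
Your proposal is correct and matches the paper's own treatment: the paper does not give a standalone proof of Lemma~\ref{lEopt} but simply states that it follows from Theorem~1 and Theorem~6 of \cite{RosaHarman16}, which is exactly what you do. The translation-of-notation step you flag as the main obstacle is routine, as you anticipated.
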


A (standard) design that satisfies 
$$\xi(0,k)=\xi(k,k)=\frac{1}{2n} \text{ for }k\in\{1,\ldots,n\},$$
and is zero otherwise is a \emph{Senn design} (see \cite{Bailey09}, \cite{Senn97}). In such a design, half of the subjects in each cohort are assigned to the placebo and the other half are assigned to the highest allowed dose for that cohort. Note that Senn designs satisfy Lemma \ref{lEopt}, i.e., they achieve the $E$-optimal treatment proportions.
We show that Senn designs are the unique $E$-optimal standard dose-escalation designs.

\begin{center}
	\begin{table}[h]
		\centering
		\begin{tabular}{  c | c  c  c  c  }
			$i\backslash k$ & 1 & 2 & 3 & 4  \\ \hline
			0 & 1/8 & 1/8 & 1/8 & 1/8  \\
			1 & 1/8 & 0   & 0   & 0  \\
			2 & 0   & 1/8 & 0   & 0  \\
			3 & 0   & 0   & 1/8 & 0 \\
			4 & 0   & 0   & 0   & 1/8 \\ 
		\end{tabular}
		\caption{Senn design for $n=4$ doses. $i$ - treatment number, $k$ - cohort number}
	\end{table}
\end{center}

\begin{theorem}\label{tEoptStd}
	A standard design $\xi$ is $E$-optimal for comparing drug doses with the placebo if and only if $\xi$ is a Senn design.
\end{theorem}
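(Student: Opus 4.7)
The plan is to prove the theorem in three steps: establish the universal upper bound $\lambda_{\min}(C) \leq 1/(4n)$ for every feasible standard design, verify that Senn designs attain this bound, and show that any design attaining the bound coincides with the Senn design. Recall from (\ref{eInfMat}) that $C = \diag(r_1,\ldots,r_n) - nZZ^\T $ with $Z=(\xi(i,k))_{i\ge 1,k\ge 1}$.

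For the upper bound, I would apply the Rayleigh quotient inequality to the test vector $v = 1_n$. Writing $p_k := \xi(0,k)$, so that $\sum_{i=1}^n \xi(i,k) = 1/n - p_k$ by constraint (ii), a brief algebraic reduction gives $1_n^\T  C\, 1_n = \sum_{k=1}^n p_k(1 - np_k)$. Each summand is a concave quadratic in $p_k \in [0,1/n]$ with unique maximum $1/(4n)$ at $p_k = 1/(2n)$, so $1_n^\T  C\, 1_n \leq 1/4$, which yields $\lambda_{\min}(C) \leq 1_n^\T  C\, 1_n / n \leq 1/(4n)$. A direct substitution shows that a Senn design has $Z = (1/(2n))I_n$ and $C = (1/(4n))I_n$, hence attains the bound; so Senn designs are $E$-optimal.

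For the uniqueness direction, assume $\xi$ attains $\lambda_{\min}(C) = 1/(4n)$. Then the two inequalities above collapse to equalities; the first forces $1_n$ to be an eigenvector of $C$ with eigenvalue $1/(4n)$, and the second forces each $p_k(1 - np_k) = 1/(4n)$, so $\xi(0,k) = 1/(2n)$ for every cohort $k$. To pin down the non-placebo entries, I would combine the diagonal bound $C_{kk} = e_k^\T  C e_k \geq \lambda_{\min}(C) = 1/(4n)$ with a backward induction on $k$. In the base case $k=n$, constraint (i) forces dose $n$ to appear only in cohort $n$, so $r_n = \xi(n,n)$ and $C_{nn} = r_n(1 - n r_n)$; the same concave-quadratic argument on $[0,1/n]$ then gives $\xi(n,n) = 1/(2n)$, and constraint (ii) together with $\xi(0,n) = 1/(2n)$ eliminates all remaining entries in cohort $n$. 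The inductive step repeats this reasoning: once cohorts $k+1,\ldots,n$ are known to contain only the placebo and their respective latest dose, we have $\xi(k,k')=0$ for $k' > k$, so $r_k = \xi(k,k)$ and the analysis of dose $k$ reduces to the single scalar identity $C_{kk} = r_k(1 - n r_k)$, which forces $\xi(k,k) = 1/(2n)$ and then emptying the remaining entries in cohort $k$ via (ii).

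The delicate point is ensuring that the backward induction is set up correctly—specifically, that the elimination of $\xi(k,k')$ for $k' > k$ from previously processed cohorts is what collapses $r_k$ to a single term $\xi(k,k)$ and makes the scalar concave-quadratic argument give uniqueness. Once that bookkeeping is in place, the remainder is a routine consequence of the fact that the concave quadratic $x \mapsto x(1-nx)$ on $[0,1/n]$ attains its maximum only at the midpoint.
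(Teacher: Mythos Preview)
Your proof is correct and shares the overall skeleton with the paper's argument---both test the Rayleigh quotient against the all-ones direction to obtain the bound $\lambda_{\min}(C)\le 1/(4n)$, both verify the Senn design attains it, and both finish with a backward induction on cohorts to force the Senn structure. The genuine difference is that the paper outsources a step: it invokes Lemma~\ref{lEopt} (an external result from \cite{RosaHarman16}) to conclude a priori that any $E$-optimal design must have $r_0=1/2$ and $r_i=1/(2n)$, and then runs the Rayleigh argument on $ZZ^\T$ assuming $\diag(r_1,\ldots,r_n)=I_n/(2n)$. You avoid this external lemma entirely: your identity $1_n^\T C\,1_n=\sum_k p_k(1-np_k)$ holds without any assumption on the $r_i$, and in the induction you substitute the unknown $r_k=1/(2n)$ by the diagonal Rayleigh bound $C_{kk}\ge\lambda_{\min}(C)=1/(4n)$, which together with $C_{kk}=r_k(1-nr_k)\le 1/(4n)$ pins down $\xi(k,k)=1/(2n)$. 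This makes your argument fully self-contained and slightly more elementary. On the other hand, the paper's reliance on Lemma~\ref{lEopt} pays off in the extended-design case (Theorem~\ref{tEoptExt}), where the same lemma is reused and your diagonal trick would not suffice without modification. Minor note: the observation that $1_n$ must be an eigenvector is correct but not actually used in the remainder of your argument.
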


The proof of Theorem \ref{tEoptStd} and all other non-trivial proofs are given in the appendix.

\subsection{Extended Designs}

Since the extra cohort in extended designs provides additional freedom for the experimenter, the class of $E$-optimal extended designs is richer, compared to the single $E$-optimal standard design (for given $n$). The $E$-optimal extended designs can be completely characterized by the following conditions: the designs assign the placebo to half of the subjects in each cohort, and allocate each dose to the same total number of subjects, $N/(2n)$. Note that the $E$-optimality of standard designs can be described by the same conditions; however, in the standard design case, the Senn designs are the only ones that satisfy these conditions. Recall that $t=n+1$ in the extended design case.

\begin{theorem}\label{tEoptExt}
	An extended design $\xi$ is $E$-optimal for comparing the drug doses with the placebo if and only if it satisfies $\xi(0,k)=1/(2t)$ for $k=1,\ldots,n+1$ and $r_1(\xi) = \ldots = r_n(\xi) =1/(2n)$.
\end{theorem}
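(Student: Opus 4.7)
The plan is a two-sided bound on $\lambda_{\min}(N_A(\xi)) = \lambda_{\min}(C(\xi))$ that isolates the two conditions in the statement: the placebo allocation and the total dose replications. The upper bound $\lambda_{\min}(C) \leq 1/(4n)$ will come from evaluating the Rayleigh quotient at the test vector $v = 1_n$, the matching lower bound for designs of the stated form from a cohort-wise Cauchy--Schwarz estimate, and the replications $r_i = 1/(2n)$ in the necessary direction from Lemma~\ref{lEopt}.

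For the upper bound, I expand
\[
1_n^\T C(\xi) 1_n = \sum_{i=1}^n r_i - t\sum_k \Big(\sum_{i=1}^n \xi(i,k)\Big)^2,
\]
and use $\sum_{i=1}^n \xi(i,k) = 1/t - \xi(0,k)$ (from constraint~(ii)) to reduce this to $r_0 - t\sum_k \xi(0,k)^2$. Cauchy--Schwarz on the placebo column yields $\sum_k \xi(0,k)^2 \geq r_0^2/t$, so $\lambda_{\min}(C) \leq 1_n^\T C 1_n / n \leq (r_0 - r_0^2)/n \leq 1/(4n)$, with equality throughout iff $r_0 = 1/2$ and $\xi(0,k) = 1/(2t)$ for every $k$.

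For sufficiency, assume $\xi(0,k) = 1/(2t)$ and $r_i = 1/(2n)$, so that $\sum_{i=1}^n \xi(i,k) = 1/(2t)$ uniformly across cohorts. Cauchy--Schwarz applied inside each cohort gives $\big(\sum_{i=1}^n \xi(i,k) v_i\big)^2 \leq (1/(2t)) \sum_{i=1}^n \xi(i,k) v_i^2$ for every $v \in \mathbb{R}^n$; summing over $k$ and using $\sum_k \xi(i,k) = 1/(2n)$ yields $\|Z^\T v\|^2 \leq \|v\|^2/(4nt)$, hence $tZZ^\T \preceq (1/(4n)) I_n$. Combined with $\mathrm{diag}(r_1,\ldots,r_n) = (1/(2n)) I_n$, this gives $C \succeq (1/(4n)) I_n$, matching the upper bound. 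In particular, the value $1/(4n)$ is attained, e.g., by the feasible design with $\xi(0,k) = 1/(2t)$ for all $k$, $\xi(k,k) = 1/(2t)$ for $k \leq n$, and $\xi(i, n+1) = 1/(2nt)$ for $i = 1,\ldots,n$.

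For the $r_i = 1/(2n)$ part of the necessary direction, observe that the design just exhibited remains feasible when the dose-escalation condition~(i) is dropped, so the maximum of $\lambda_{\min}(C)$ over the larger class without~(i) still equals $1/(4n)$; every constrained $E$-optimum is therefore also $E$-optimal in that larger class, and Lemma~\ref{lEopt} then forces $r_0 = 1/2$ and $r_i = 1/(2n)$. I expect the only mildly technical step to be the cohort-wise Cauchy--Schwarz used in sufficiency: that single inequality is what makes the within-cohort distribution of the active doses irrelevant to $E$-optimality, and thereby explains why the class of $E$-optimal extended designs is genuinely richer than the singleton class of the standard case.
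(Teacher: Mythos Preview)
Your argument is correct and, for the sufficiency direction, substantially simpler than the paper's. The paper establishes $\lambda_{\max}(ZZ^\T)\le 1/(4nt)$ by first showing that $\hat Z=(2nZ^\T,\,1_{t}/t)$ is doubly stochastic, invoking the Birkhoff--von~Neumann theorem to write it as a convex combination of permutation matrices, and then chaining several convexity and norm inequalities over the permutation decomposition. Your cohort-wise Cauchy--Schwarz
\[
\Big(\sum_{i=1}^n \xi(i,k)\,v_i\Big)^{2}\le\Big(\sum_{i=1}^n\xi(i,k)\Big)\sum_{i=1}^n\xi(i,k)\,v_i^{2}
\]
yields the same operator bound $tZZ^\T\preceq (4n)^{-1}I_n$ in one line, and makes transparent why only the column sums of $Z$ (hence only $\xi(0,k)$ and the $r_i$) matter. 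For necessity both proofs are essentially the same: the paper and you both test the Rayleigh quotient at $u=1_n/\sqrt{n}$ and then appeal to Lemma~\ref{lEopt}; you package the placebo-allocation conclusion as equality in Cauchy--Schwarz on $(\xi(0,k))_k$, while the paper phrases it as the unique optimum of the strictly convex programme $\min\sum_j q_j^2$ subject to $\sum_j q_j=1/2$. Two small points worth making explicit when you write this up: your upper bound $\lambda_{\min}(C)\le 1/(4n)$ uses only constraint~(ii), so it already applies to the class without~(i) and justifies that the unconstrained optimum is also $1/(4n)$; and the bound $C\succeq(4n)^{-1}I_n$ in your sufficiency step forces $C$ to be nonsingular, which in turn gives $\mathrm{rank}(M_\tau)=n$ and hence feasibility of every design satisfying the stated conditions (the paper handles this via connectedness).
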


The standard Senn designs can be extended to obtain $E$-optimal extended designs. We say that a \emph{uniformly extended Senn design} is a design that satisfies
$$\xi(0,k) = \frac{1}{2t}\text{ for } k=1,\ldots,n+1, \quad 
\xi(i,i) = \frac{1}{2t} \text{ and } \xi(i,n+1)=\frac{1}{2nt} \text{ for } i=1,\ldots,n,$$
and is zero otherwise, see Table \ref{tblUnifSenn}. Such a design is given by the standard Senn design in the first $n$ cohorts. In the last cohort, half of the subjects are assigned to the control and the other half are uniformly distributed among all other treatments. The uniformly extended Senn designs obviously satisfy the conditions of Theorem \ref{tEoptExt} and thus are $E$-optimal.

\begin{center}
	\begin{table}[h]
		\centering
		\begin{tabular}{  c | c c c c c  }
			$i\backslash k$ & 1 & 2 & 3 & 4 & 5  \\ \hline
			0 & 1/10 & 1/10 & 1/10 & 1/10 & 1/10 \\
			1 & 1/10 & 0    & 0    & 0    & 1/40  \\
			2 & 0   & 1/10  & 0    & 0    & 1/40  \\
			3 & 0   & 0     & 1/10 & 0    & 1/40  \\
			4 & 0   & 0     & 0    & 1/10 & 1/40 
		\end{tabular}
		\caption{Uniformly extended Senn design for $n=4$ doses. $i$ - treatment number, $k$ - cohort number}
		\label{tblUnifSenn}
	\end{table}
\end{center}

However, the class of $E$-optimal extended dose-escalation designs does not contain only the uniformly extended Senn designs.
The complete characterization of $E$-optimal extended designs in Theorem \ref{tEoptExt} by linear constraints allows one to choose the best $E$-optimal design with respect to some secondary criterion, such as other optimality criteria (e.g., the well known criteria of $D$- and $A$-optimality; for their definitions see \cite{puk}). 

Because the provided characterization of $E$-optimal extended designs is linear, we can employ an efficient convex optimization method from \cite{HarmanSagnol} to construct $A$-optimal or $D$-optimal designs within the class of all $E$-optimal designs:

\begin{example}
	Let $n=4$. Then, the $A$-optimal and $D$-optimal designs in the class of all $E$-optimal extended designs are given in Tables \ref{tblAinEopt} and \ref{tblDinEopt}, respectively.
	
	\begin{center}
		\begin{table}[h]
			\centering
			\begin{tabular}{  c | c  c  c c c }
				$i\backslash k$ & 1 & 2 & 3 & 4 & 5  \\ \hline
				0 & 0.1000 &  0.1000 & 0.1000 & 0.1000 & 0.1000 \\
				1 & 0.1000  & 0.0219 & 0.0031 & 0.0000 & 0.0000 \\
				2 & 0 & 0.0781 & 0.0287 & 0.0091 & 0.0091 \\
				3 & 0 & 0 & 0.0682 & 0.0284 & 0.0284 \\
				4 & 0 & 0 & 0 & 0.0625 & 0.0625
			\end{tabular}
			\caption{$A$-optimal design in the class of all $E$-optimal extended designs. $i$ - treatment number, $k$ - cohort number}
			\label{tblAinEopt}
		\end{table}
	\end{center}
	\begin{center}
		\begin{table}[h]
			\centering
			\begin{tabular}{  c | c  c  c c c }
				$i\backslash k$ & 1 & 2 & 3 & 4 & 5 \\ \hline
				0 & 0.1000  &  0.1000  &  0.1000  &  0.1000 &   0.1000 \\
				1 & 0.1000  &  0.0248  &  0.0002  &  0.0000 &   0.0000 \\
				2 & 0  &  0.0752   & 0.0339  &  0.0079  &  0.0079 \\
				3 & 0  &       0   & 0.0659  &  0.0296  &  0.0296 \\
				4 & 0  &       0   &      0  &  0.0625  &  0.0625
			\end{tabular}
			\caption{$D$-optimal design in the class of all $E$-optimal extended designs. $i$ - treatment number, $k$ - cohort number}
			\label{tblDinEopt}
		\end{table}
	\end{center}
\end{example}

\subsection{Interpretation of $E$-optimality for Dose-Escalation}\label{ssEoptInterpretation}

\cite{MajumdarNotz}, who pioneered the use of the optimality criteria in design of experiments for comparing test treatments with a control, noted that $E$-optimality ``does not seem to posses a very natural statistical meaning [for the comparison with a control]''. Since then, the criterion received little attention in design literature on the comparison with a control, with a few exceptions (see \cite{Notz}, \cite{MorganWang}).

Nevertheless, the $E$-optimality does have a statistical interpretation for estimating a system of contrasts $Q^T\tau$ in terms of the confidence ellipsoid for $Q^T\tau$, which represents the amount of uncertainty about the studied system of contrasts.
Under the assumption of normal errors, the largest eigenvalue of $N_A^{-1}(\xi)$ is proportional to the square of the length of the largest semi-axis of the confidence ellipsoid for $Q^T\tau$. Therefore, any $E$-optimal design for comparing drug doses with the placebo minimizes the length of the largest semi-axis of the confidence ellipsoid for estimating $\tau_1-\tau_0, \ldots, \tau_n-\tau_0$. This suggests that the $E$-optimality criterion may be relevant, as it minimizes the diameter of the confidence ellipsoid.

The $E$-optimality can also be expressed in the terms of the variances of the drug dose-placebo contrasts in a straightforward manner, as by \cite{MorganWang}, even without the normality assumption. We can write
$$\lambda_{\max}(N_A^{-1}(\xi)) = \max_{\lVert x \rVert = 1} x^T N_A^{-1}(\xi) x.$$
Because $N_A^{-1}(\xi)$ is proportional to the variance matrix of the least-squares estimator of $Q^T\tau$ (where $Q^T=(-1_n,I_n)$), we have:

\begin{proposition}\label{pEopt}
	Any standard or extended $E$-optimal design for comparing the drug doses with the placebo minimizes the maximum variance among all least-squares estimators of the drug dose-placebo contrasts of the form  
	\begin{equation}\label{eCwCcontrasts}
	x^T Q^T \tau = \sum_{i=1}^n x_i \tau_i - (\sum_{i=1}^n x_i) \tau_0, \text{ where }\lVert x \rVert = 1.
	\end{equation}
\end{proposition}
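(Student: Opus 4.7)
The proof is essentially a direct consequence of the observation already made just before the proposition, namely that $\lambda_{\max}(N_A^{-1}(\xi)) = \max_{\lVert x \rVert = 1} x^\T N_A^{-1}(\xi) x$, together with the explicit form $\Var_\xi(\widehat{Q^\T\tau}) = N_A^{-1}(\xi)\sigma^2/N$ stated in the model section. My plan is therefore to assemble these two facts into the claim.

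First, I would fix any feasible design $\xi\in\Xi$ and any unit vector $x\in\mathbb{R}^n$. Since $\xi$ is feasible for $Q^\T\tau$, the contrast $x^\T Q^\T\tau$ is estimable, and its least-squares estimator satisfies
\begin{equation*}
\Var_\xi\bigl(\widehat{x^\T Q^\T\tau}\bigr) = x^\T \Var_\xi(\widehat{Q^\T\tau})\, x = \frac{\sigma^2}{N}\, x^\T N_A^{-1}(\xi)\, x.
\end{equation*}
Taking the supremum over $\lVert x\rVert = 1$ and using the variational characterization of the largest eigenvalue of the symmetric positive-definite matrix $N_A^{-1}(\xi)$,
\begin{equation*}
\max_{\lVert x\rVert = 1} \Var_\xi\bigl(\widehat{x^\T Q^\T\tau}\bigr) = \frac{\sigma^2}{N}\,\lambda_{\max}\bigl(N_A^{-1}(\xi)\bigr).
\end{equation*}

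Next, I would note that the definition of $E$-optimality as maximization of $\lambda_{\min}(N_A(\xi))$ is equivalent to minimization of $\lambda_{\max}(N_A^{-1}(\xi))$, since $N_A(\xi)$ is positive definite whenever $\xi$ is feasible (as explicitly mentioned at the start of Section~3). Consequently, an $E$-optimal design minimizes the right-hand side of the displayed equation over $\Xi$, which is exactly the maximum variance over the class of contrasts of the form $\sum_{i=1}^n x_i \tau_i - (\sum_{i=1}^n x_i)\tau_0$ with $\lVert x\rVert = 1$. This applies uniformly to both the standard and extended settings, as the characterizations of $E$-optimal designs in Theorems~\ref{tEoptStd} and~\ref{tEoptExt} do not enter the argument.

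I do not anticipate any real obstacle; the step that most deserves care is simply the justification that, for every feasible $\xi$, the variance identity $\Var_\xi(\widehat{Q^\T\tau}) = N_A^{-1}(\xi)\sigma^2/N$ legitimately applies to the derived scalar contrast $x^\T Q^\T\tau$, which is immediate because $\mathcal{C}(Qx)\subseteq\mathcal{C}(Q)\subseteq\mathcal{C}(M_\tau(\xi))$, so that $x^\T Q^\T\tau$ is estimable whenever $Q^\T\tau$ is.
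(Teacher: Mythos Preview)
Your proposal is correct and is precisely the argument the paper has in mind: the proposition is treated as immediate from the variational identity $\lambda_{\max}(N_A^{-1}(\xi)) = \max_{\lVert x \rVert = 1} x^\T N_A^{-1}(\xi) x$ together with $\Var_\xi(\widehat{Q^\T\tau}) = N_A^{-1}(\xi)\sigma^2/N$, both of which are stated just before the proposition. No separate proof appears in the appendix because the paper considers this derivation trivial; your write-up simply makes the steps explicit.
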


\cite{Notz} obtained a class of $E$-optimal exact block designs in a model without design constraints and provided two statistical interpretations for these designs, using test treatment-control contrasts. One of these interpretations (Theorem 3.3) was that the obtained designs minimized the maximum variance for the contrasts of the form \eqref{eCwCcontrasts}. \cite{MorganWang} analysed the relationship as in Proposition \ref{pEopt} further, using weighted optimality criteria (Theorem 2.1 therein).

Employing the relationship between $E$-optimality and $c$-optimality (see \cite{PukStudden}, \cite{puk}), it is possible to give one more interpretation of the $E$-optimality in our model using the drug dose-placebo contrasts.

\begin{theorem}\label{tEoptcopt}
	Let $\xi$ be an $E$-optimal standard or extended design in model (\ref{eModelDE}) for comparing the drug doses with the placebo. Then, $\xi$ is $c$-optimal, where $c=(-1,1_n^\T /n,0_{t+1}^\T )^\T $. That is, $\xi$ minimizes the variance of the least-squares estimator of $\sum_i (\tau_i - \tau_0)/n$.
\end{theorem}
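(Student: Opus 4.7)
The plan is to bound $c^\T M^-(\xi) c = (1/n^2) \cdot 1_n^\T N_A^{-1}(\xi) 1_n$ from below by a constant independent of $\xi \in \Xi$, and then to verify that every $E$-optimal design attains that bound. Since $c^\T \beta = (1/n)\sum_{i=1}^n (\tau_i - \tau_0) = v^\T Q^\T \tau$ with $v = (1/n)1_n$, the variance of its LSE equals $v^\T N_A^{-1}(\xi) v \cdot \sigma^2/N$. Consequently, the theorem reduces to showing
\begin{equation*}
1_n^\T N_A^{-1}(\xi) 1_n \geq 4n^2 \quad \text{for every } \xi \in \Xi,
\end{equation*}
with equality whenever $\xi$ is $E$-optimal.

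For the lower bound I will use the inequality $(u^\T v)^2 \leq (u^\T A u)(v^\T A^{-1} v)$, valid for any positive definite $A$, applied with $u = v = 1_n$ and $A = N_A(\xi)$; this gives $1_n^\T N_A^{-1}(\xi) 1_n \geq n^2 / (1_n^\T N_A(\xi) 1_n)$, so it suffices to show $1_n^\T N_A(\xi) 1_n \leq 1/4$ uniformly over $\Xi$. Writing $a_k := \xi(0,k)$, the cohort-size condition (ii) yields $(Z^\T 1_n)_k = 1/t - a_k$ and $\sum_{i=1}^n r_i = 1 - \sum_k a_k$; substituting into (\ref{eInfMat}) and simplifying gives
\begin{equation*}
1_n^\T N_A(\xi) 1_n = \sum_{k=1}^t a_k - t \sum_{k=1}^t a_k^2.
\end{equation*}
By Cauchy-Schwarz, $t \sum_k a_k^2 \geq (\sum_k a_k)^2$, so the right-hand side is at most $S(1-S) \leq 1/4$ with $S := \sum_k a_k \in [0,1]$.

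For the matching upper bound at $E$-optimal designs, I invoke the characterizations in Theorems \ref{tEoptStd} and \ref{tEoptExt}: every $E$-optimal $\xi^*$ satisfies $\xi^*(0,k) = 1/(2t)$ and $r_i(\xi^*) = 1/(2n)$. A short computation then gives $Z^\T 1_n = (1/(2t)) 1_t$ and $ZZ^\T 1_n = (1/(2t))(r_1,\ldots,r_n)^\T = (1/(4tn)) 1_n$, whence
\begin{equation*}
N_A(\xi^*) 1_n = \tfrac{1}{2n} 1_n - t \cdot \tfrac{1}{4tn} 1_n = \tfrac{1}{4n} 1_n.
\end{equation*}
Thus $1_n$ is an eigenvector of $N_A(\xi^*)$ with eigenvalue $1/(4n)$, so $1_n^\T N_A^{-1}(\xi^*) 1_n = 4n^2$, matching the lower bound; equivalently, equality holds in the generalized Cauchy-Schwarz applied above.

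The argument is essentially bookkeeping; the only mild subtlety is that the entire computation depends only on the placebo row $\xi(0,\cdot)$, so the dose-escalation constraint (i) plays no role in the lower-bound step, and the key property at equality is precisely that $1_n$ lies in the minimum-eigenspace of $N_A(\xi^*)$, which is supplied by the $E$-optimality characterizations of Theorems \ref{tEoptStd} and \ref{tEoptExt}.
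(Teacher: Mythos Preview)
Your proof is correct and takes a genuinely different route from the paper.

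The paper establishes the result via Pukelsheim's duality theory: it invokes the General Equivalence Theorem for $E$-optimality (Theorem 7.21 in \cite{puk}) and the $E$-/$c$-optimality bridge (Theorem 7.23 in \cite{puk}). Concretely, for an $E$-optimal design it fixes $h=1_n/\sqrt{n}$, $E=hh^\T$, constructs an explicit generalized inverse $G$ of $M$ via \eqref{eGInv}, and then verifies by direct matrix computation that the normality inequality \eqref{eGETE} holds with equality for every competing design satisfying (i) and (ii). The $c$-optimality with $c=Ah/\sqrt{n}$ then falls out of Lemma~\ref{lEoptcopt}.

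You bypass this machinery entirely. Instead of appealing to any equivalence theorem, you bound $1_n^\T N_A^{-1}(\xi)1_n$ from below by $4n^2$ uniformly on $\Xi$ via the generalized Cauchy--Schwarz inequality, reducing the problem to the scalar bound $1_n^\T N_A(\xi)1_n\le 1/4$, which you obtain by the neat identity $1_n^\T N_A(\xi)1_n = S - t\sum_k a_k^2$ with $S=\sum_k a_k$, followed by two elementary inequalities. The matching upper bound at $E$-optimal designs comes from the eigenvector relation $N_A(\xi^*)1_n=(1/(4n))1_n$, which you derive directly from the characterizations in Theorems~\ref{tEoptStd} and \ref{tEoptExt}.

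Your argument is shorter, avoids generalized inverses and the Pukelsheim lemmas, and makes transparent that the bound depends only on the placebo row $\xi(0,\cdot)$. The paper's approach, by contrast, is the canonical optimal-design argument and delivers slightly more: the normality inequality is checked over all designs satisfying (i) and (ii), not just the feasible ones in $\Xi$, and the equality case in \eqref{eGETE} is exhibited explicitly, which can be useful for further analysis (e.g., identifying the full class of $c$-optimal designs). Both routes are valid for the theorem as stated.
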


Theorem \ref{tEoptcopt} shows that if a standard or extended design is $E$-optimal for comparing the doses with the placebo, it minimizes the variance of the average dose effect compared with the placebo, or the average of the dose effects relative to the placebo. That is, the $E$-optimal designs (among other desirable properties) also minimize
\begin{equation}\label{ecopt}
\Var\Big(\widehat{\frac{1}{n}\sum_{i=1}^n \tau_i-\tau_0}\Big), 
\end{equation}
where $\hat{\alpha}$ is the least-squares estimator of $\alpha$. The other interpretation of the obtained class of $E$-optimal exact designs in the standard block model by \cite{Notz} was Theorem 3.2, in which the author showed that the obtained designs minimized \eqref{ecopt}.


By contrast, the $A$-optimality criterion, which is often used in test treatment-control experiments, minimizes the average variance of the least-squares estimators of the contrasts of interest. That is, it minimizes the average variance for the dose effects compared with the placebo effect, which can be expressed as minimizing
$$
\frac{1}{n}\sum_{i=1}^n\Var\Big(\widehat{\tau_i-\tau_0}\Big);
$$
compare with (\ref{ecopt}).

In fact, in the proof of Theorem \ref{tEoptcopt} we show that the $E$-optimal designs for comparing drug doses with the placebo minimize $h^T N_Q^{-1}(\xi) h$, where $h=1_n$; i.e., the $E$-optimal designs minimize the sum of all elements of the inverse of the information matrix. It follows that the $E$-optimal designs for comparing drug doses with the placebo minimize the sum of all elements of the variance matrix of $\widehat{Q^T\tau}$. In other words, such designs minimize the sum (or the average) of all variances $\Var(\widehat{\tau_i-\tau_0})$, $i=1,\ldots,n$, and covariances $\mathrm{Cov}(\widehat{\tau_i-\tau_0},\widehat{\tau_j-\tau_0})$, $i \neq j$, for the drug dose-placebo comparisons. By contrast, the $A$-optimal designs minimize the trace of $N_A^{-1}(\xi)$, i.e., the sum (or the average) of all variances $\Var(\widehat{\tau_i-\tau_0})$.

\section{$LV$-optimality}

Senn suggested in his commentary \cite{Senn09} that what he calls the \emph{latest variance} ($LV$) criterion may be more appropriate to consider, rather than the traditional optimality criteria. Such a criterion seeks to minimize the variance of the estimator for comparing the latest dose with the placebo after each cohort. The reasoning is that after each cohort, the experimenter decides whether or not to continue with the experiment, and this decision is based on the results obtained so far (the effect of the latest dose relative to the placebo). Therefore, crucial in the experiment is the variance of the comparison of the latest dose with the placebo, rather than all variances based on trials which may not even be performed (if the experiment stops before all doses were tried). 

Since the Senn designs place all the weight in a given cohort on the comparison of the latest dose with the placebo, intuitively, it is to be expected for them to be $LV$-optimal. We formalize this observation as well as the $LV$-optimality criterion itself.
\bigskip

We denote by $\Var_k(\widehat{\tau_k - \tau_0})$ the variance of the least-squares estimator of $\tau_k - \tau_0$ based on the trials in the first $k$ cohorts, $k=1,\ldots,n$. Then, an $LV$-\emph{optimal} design $\xi$ for comparing the drug doses with the placebo minimizes each of $\Var_1(\widehat{\tau_1 - \tau_0})$, \ldots, $\Var_n(\widehat{\tau_n - \tau_0})$ among all feasible dose-escalation designs. By a feasible design, we mean here a design under which $\tau_1-\tau_0$ is estimable after one cohort, $\tau_2-\tau_0$ is estimable after the first two cohorts etc. We further remark that the $LV$-optimality criterion is not defined for a general design problem; it is strongly tied to the particular properties of the dose-escalation studies.

The criterion of $LV$-optimality seems to call for a simultaneous multicriterial optimization (minimizing $n$ variances) which need not have a solution in general; i.e., some trade-off between the various variances of interest might be needed. However, we show that in our model the Senn designs do have this strong property, i.e., no other dose-escalation design has any of the variances in question lower.

%

\begin{theorem}\label{tLVopt}
	Let $\xi$ be a Senn design. Then, $\xi$ is an $LV$-optimal standard design.
\end{theorem}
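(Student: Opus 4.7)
The plan is to prove $LV$-optimality of the Senn design one variance at a time: for each $k\in\{1,\ldots,n\}$, I will show that no feasible dose-escalation design achieves a smaller $\Var_k(\widehat{\tau_k-\tau_0})$ than the Senn design.

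Since $\Var_k$ depends only on the trials in cohorts $1,\ldots,k$, I would first restrict $\xi$ to these cohorts and rescale it to a probability measure $\tilde\xi_{[k]}$ (each cohort having weight $1/k$). The dose-escalation constraint (i) forces only treatments $0,1,\ldots,k$ to appear, so writing $c=e_{k+1}-e_1\in\mathbb{R}^{k+1}$ for the contrast $\tau_k-\tau_0$, the Schur-complement reduction in (\ref{eMtau})--(\ref{eInfMat}), together with the fact that the sub-design carries $Nk/n$ observations, yields
$$\Var_k(\widehat{\tau_k-\tau_0})=\frac{n\sigma^2}{Nk}\,c^\T M_\tau^-(\tilde\xi_{[k]})\,c.$$

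The heart of the proof is a matching lower bound. I would use the variational identity $c^\T M^- c=\sup_v (c^\T v)^2/(v^\T M v)$, valid when $c\in\mathcal{C}(M)$, with the single test vector $v=e_{k+1}$, the unit vector for dose $k$. Then $c^\T v=1$, and because dose $k$ can occur only in cohort $k$ within the first $k$ cohorts, the quadratic form collapses to a one-variable expression
$$v^\T M_\tau(\tilde\xi_{[k]})v=\tilde\xi_{[k]}(k,k)-k\,\tilde\xi_{[k]}(k,k)^2\leq\frac{1}{4k},$$
a concave function of $\tilde\xi_{[k]}(k,k)\in[0,1/k]$ attaining its maximum $1/(4k)$ precisely at $\tilde\xi_{[k]}(k,k)=1/(2k)$. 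Hence $c^\T M_\tau^-(\tilde\xi_{[k]})c\geq 4k$, i.e., $\Var_k(\widehat{\tau_k-\tau_0})\geq 4n\sigma^2/N$, for every feasible dose-escalation design.

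It then remains to verify that the Senn design attains this bound. For the Senn design $\tilde\xi_{Senn,[k]}(k,k)=1/(2k)$, so $v=e_{k+1}$ realizes $v^\T M_\tau v=1/(4k)$; a direct computation of the last column of $M_\tau(\tilde\xi_{Senn,[k]})=\diag(r)-kZZ^\T$ also shows $M_\tau(\tilde\xi_{Senn,[k]})\,e_{k+1}=c/(4k)$, so $v$ is in the right direction and the variational inequality is tight. Equivalently, the within-cohort-$k$ comparison $\bar Y_{\text{dose }k,k}-\bar Y_{\text{placebo},k}$ is already the BLUE and has variance $4n\sigma^2/N$. The only real conceptual obstacle is spotting that a single one-dimensional dual certificate simultaneously delivers the lower bound and is tight for the Senn design; this collapse is possible because the dose-escalation constraint isolates the contribution of dose $k$ to cohort $k$ alone. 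A minor auxiliary point worth stating is that feasibility of the sub-design for $\tau_k-\tau_0$ guarantees both $c\in\mathcal{C}(M_\tau(\tilde\xi_{[k]}))$ and $v^\T M_\tau v>0$, so the variational characterization genuinely applies.
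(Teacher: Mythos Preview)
Your argument is correct and arrives at the same conclusion, but by a genuinely different route than the paper. The paper reduces to the case $k=n$ just as you do, and then invokes the General Equivalence Theorem for $c$-optimality: it constructs an explicit generalized inverse $G$ of the \emph{full} moment matrix of the Senn design and verifies the normality inequality $c^\T G^\T M(\xi')Gc\le c^\T M^-c$ for every competitor $\xi'$ (in fact, it obtains equality, again precisely because $\xi'(n,n)=r_n'$ due to constraint (i)). You instead work entirely in the Schur-complemented treatment space and use the Cauchy--Schwarz/variational bound $c^\T M_\tau^-c\ge (c^\T v)^2/(v^\T M_\tau v)$ with the single test vector $v=e_{k+1}$, reducing the problem to a one-variable concave maximization. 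Both proofs hinge on the same structural fact---that dose $k$ can occur only in cohort $k$, so the relevant quadratic form collapses to a function of $\xi(k,k)$ alone---but your packaging is more elementary and self-contained, avoiding the GET machinery and the construction of a full generalized inverse. The paper's approach, on the other hand, is the canonical optimal-design argument and makes explicit the dual certificate $Gc$, which is reused verbatim in later proofs (Theorems~\ref{tLVoptExt} and~\ref{tEoptcopt}). Your side remark about feasibility is apt: it guarantees both $c\in\mathcal C(M_\tau)$ and $v^\T M_\tau v>0$ (the latter because $0<\tilde\xi_{[k]}(k,k)<1/k$ whenever $\tau_k-\tau_0$ is estimable from the first $k$ cohorts), so the variational inequality is legitimate.
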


In the case of extended designs, a design is $LV$\emph{-optimal} if it minimizes $\Var_1(\widehat{\tau_1 - \tau_0})$, \ldots, $\Var_n(\widehat{\tau_n - \tau_0}), \Var_{n+1}(\widehat{\tau_n - \tau_0})$, where $\Var_{n+1}(\widehat{\tau_n - \tau_0})$ is the variance of $\widehat{\tau_n - \tau_0}$ once the entire study has been carried out. Unsurprisingly, the $LV$-optimal design is the extended Senn design $\xi$ constructed as follows: $\xi$ is given by the Senn design in the first $n$ cohorts and its $(n+1)$-st cohort is a replication of the $n$-th cohort (in the sense of the treatment proportions). Formally, $$\xi(0,k)=\xi(k,k)=\frac{1}{2(n+1)} \text{ for } k=1,\ldots,n, \quad \xi(0,n+1) = \xi(n,n+1) = \frac{1}{2(n+1)},$$
and $\xi(i,k) = 0$ otherwise, see Table \ref{tblHighestSenn}. We call such a design a \emph{highest-dose extended Senn design}.

\begin{center}
	\begin{table}[h]
		\centering
		\begin{tabular}{  c | c  c  c c c  }
			$i\backslash k$ & 1 & 2 & 3 & 4 & 5  \\ \hline
			0 & 1/10 & 1/10 & 1/10 & 1/10 & 1/10 \\
			1 & 1/10 & 0   & 0   & 0 & 0   \\
			2 & 0   & 1/10 & 0   & 0 & 0  \\
			3 & 0   & 0   & 1/10 & 0 & 0 \\
			4 & 0   & 0   & 0 & 1/10 & 1/10 
		\end{tabular}
		\caption{Highest-dose extended Senn design for $n=4$ doses. $i$ - treatment number, $k$ - cohort number}
		\label{tblHighestSenn}
	\end{table}
\end{center}

Note that the extended designs seem to be less meaningful when considering $LV$-optimality, as in such a case, the additional cohort is only used  to improve the estimate of the effect of the highest dose compared with the placebo.

\begin{theorem}\label{tLVoptExt}
	Let $\xi$ be a highest-dose extended Senn design. Then, $\xi$ is an $LV$-optimal extended design.
\end{theorem}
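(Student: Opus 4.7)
My plan is to verify the $n+1$ variance-minimization conditions defining $LV$-optimality separately. For $k=1,\ldots,n$, the argument reduces to the standard-design result (Theorem \ref{tLVopt}); the final condition on $\Var_{n+1}(\widehat{\tau_n-\tau_0})$ is handled by a direct positive-semidefinite inequality applied to the information matrix $N_A$.

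For $k\in\{1,\ldots,n\}$, the variance $\Var_k(\widehat{\tau_k-\tau_0})$ under any feasible extended design $\xi$ depends only on the restriction of $\xi$ to cohorts $1,\ldots,k$, which carries total mass $k/(n+1)$. Rescaling this restriction by the factor $(n+1)/k$ gives a design $\tilde\xi$ of total mass one; by dose-escalation, $\tilde\xi$ is supported on treatments $\{0,1,\ldots,k\}$ in cohorts $\{1,\ldots,k\}$ and satisfies condition (ii) with $t=k$, so it qualifies as a feasible standard design on $k$ doses and $k$ cohorts. Applying Theorem \ref{tLVopt} to this sub-problem (with $n$ replaced by $k$) shows that the standard Senn design on $k$ cohorts minimizes $\Var_k(\widehat{\tau_k-\tau_0})$ among all such $\tilde\xi$; tracking the rescaling factor through the variance formula yields the uniform bound $\Var_k(\widehat{\tau_k-\tau_0})\geq 4(n+1)\sigma^2/N$. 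The highest-dose extended Senn design $\xi^*$, restricted to the first $k$ cohorts and rescaled, is precisely this standard Senn design, so it attains the bound.

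The remaining inequality $\Var_{n+1}(\widehat{\tau_n-\tau_0})\geq 2(n+1)\sigma^2/N$ is the main obstacle, as it cannot be reduced to Theorem \ref{tLVopt}. To handle it, I would write $\Var_{n+1}(\widehat{\tau_n-\tau_0})=(\sigma^2/N)\,e_n^\T N_A^{-1}(\xi)\,e_n$ and apply the elementary Cauchy--Schwarz bound $e_n^\T N_A^{-1}e_n\geq 1/[N_A]_{nn}$, valid whenever $N_A$ is positive definite, with equality iff $e_n$ is an eigenvector of $N_A$. From the formula $N_A(\xi)=\diag(r_1,\ldots,r_n)-(n+1)ZZ^\T$ and the dose-escalation constraint (which forces $\xi(n,k)=0$ for $k<n$), the entry $[N_A]_{nn}$ collapses to the two-variable expression $a+b-(n+1)(a^2+b^2)$ with $a=\xi(n,n)$, $b=\xi(n,n+1)$. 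A routine concave maximization bounds this quantity by $1/(2(n+1))$, attained at $a=b=1/(2(n+1))$, and combining the two inequalities yields the desired bound. For $\xi^*$ the matrix $N_A$ is diagonal, so $e_n$ is an eigenvector and $[N_A]_{nn}=1/(2(n+1))$, saturating both inequalities.
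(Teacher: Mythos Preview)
Your proof is correct. Both you and the paper dispose of the first $n$ conditions by reducing to the standard-design result (Theorem~\ref{tLVopt}); this part is essentially identical.

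The approaches diverge for the final condition on $\Var_{n+1}(\widehat{\tau_n-\tau_0})$. The paper establishes $c$-optimality for $c=(e_{n+1}^\T-e_1^\T,0_{t+1}^\T)^\T$ via the General Equivalence Theorem: it writes down $M_\tau$, $M_\tau^-$, $M_{22}^-$, and the generalized inverse $G$ of $M$ as in \eqref{eGInv}, then verifies that the normality inequality $c^\T G^\T M(\xi')Gc\leq c^\T M^-c=2(n+1)$ holds (with equality) for every competing $\xi'$. Your argument is more elementary: the Schur-complement / Cauchy--Schwarz inequality $(N_A^{-1})_{nn}\geq 1/(N_A)_{nn}$ combined with the observation that, because dose $n$ can appear only in cohorts $n$ and $n+1$, the entry $(N_A)_{nn}$ collapses to the two-variable concave expression $a+b-(n+1)(a^2+b^2)\leq 1/(2(n+1))$. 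This exploits the special structure of the last row of $Z$ and bypasses the equivalence-theorem machinery and the construction of generalized inverses entirely. The paper's route has the virtue of being uniform with the proof of Theorem~\ref{tLVopt} and would extend more directly to other linear functionals; yours is shorter and self-contained for this particular contrast.
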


\section{$MV$-optimality}

A widely used optimality criterion in designing experiments for comparing the test treatments with a control is $MV$-optimality. A design is $MV$-optimal if it minimizes the maximum variance of the least-squares estimators of the contrasts of interest. In the case of comparing drug doses with the placebo, a design is $MV$-optimal if it minimizes
$$\Psi(\xi) = \max_{1\leq i \leq n} \Var ( \widehat{\tau_i-\tau_0} ),$$
among all $\xi \in \Xi$.

Let us consider the standard designs. Then, as in the $LV$-optimal case, the Senn designs are optimal.

\begin{theorem}\label{tMVopt}
	Let $\xi$ be a Senn design. Then, $\xi$ is an $MV$-optimal standard design for comparing drug doses with the placebo.
\end{theorem}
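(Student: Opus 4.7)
My plan is to exploit constraint~(i), which restricts the highest dose to the last cohort, and to show that the variance of $\widehat{\tau_n-\tau_0}$ alone already furnishes a lower bound on $\Psi$ matching what the Senn design achieves. I first evaluate $\Psi(\xi)$ for the Senn design by a direct computation, and then prove $\Var_{\xi'}(\widehat{\tau_n-\tau_0}) \geq \Psi(\xi)$ for every feasible standard competitor $\xi'$.

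For the Senn design, the submatrix $Z = (\xi(i,k))_{i,k=1}^{n}$ equals $(1/(2n)) I_n$, so by~\eqref{eInfMat},
$$N_A(\xi) = \frac{1}{2n}I_n - n\cdot\frac{1}{4n^2}I_n = \frac{1}{4n}I_n.$$
Hence $\Var(\widehat{\tau_i-\tau_0}) = 4n\sigma^2/N$ for every $i=1,\ldots,n$, giving $\Psi(\xi) = 4n\sigma^2/N$.

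For any feasible standard $\xi'$ with $C := N_A(\xi')$, constraint~(i) forces $\xi'(n,k)=0$ for $k<n$, so $r_n(\xi')=\xi'(n,n)$ and the last row of $Z$ has a single nonzero entry, equal to $r_n(\xi')$. A short calculation then gives $C_{nn}=r_n - n r_n^2 = r_n(1-n r_n)$. Constraint~(ii) yields $r_n\leq s_n=1/n$, and on this range the elementary one-dimensional bound $r_n(1-n r_n)\leq 1/(4n)$ forces $C_{nn}\leq 1/(4n)$. Since $C$ is positive definite, the Schur-complement identity $(C^{-1})_{nn} = 1/(C_{nn} - c_n^{\T} C_{(n)}^{-1} c_n)$ gives the textbook inequality $(C^{-1})_{nn}\geq 1/C_{nn}\geq 4n$, and consequently $\Psi(\xi')\geq\Var_{\xi'}(\widehat{\tau_n-\tau_0})=(C^{-1})_{nn}\sigma^2/N\geq 4n\sigma^2/N = \Psi(\xi)$.

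The only conceptual hurdle is recognising that the dose-escalation constraints pin down the $n$-th row of $Z$ so rigidly that the single contrast $\tau_n-\tau_0$ already furnishes a matching lower bound for the max-variance criterion; no joint analysis of the remaining $n-1$ variances is needed, and the rest of the argument reduces to a quadratic optimisation over $r_n\in[0,1/n]$ together with the standard bound on diagonal entries of the inverse of a positive-definite matrix.
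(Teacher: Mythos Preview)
Your proof is correct, and it reaches the goal by a route that differs from the paper's. Both arguments reduce $MV$-optimality to the single inequality $\Var_{\xi'}(\widehat{\tau_n-\tau_0}) \geq 4n\sigma^2/N$ for every feasible competitor, using that $\Psi(\xi_S)=4n\sigma^2/N$ because $N_A(\xi_S)=(4n)^{-1}I_n$. The paper obtains that inequality as a corollary of Theorem~\ref{tLVopt} (the $LV$-optimality of the Senn design), which was in turn proved via the General Equivalence Theorem for $c$-optimality with $c=(e_{n+1}-e_1,0_{n+1})^\T$. You instead exploit constraint~(i) to see that the last row of $Z$ has the single nonzero entry $r_n=\xi'(n,n)$, compute $C_{nn}=r_n(1-nr_n)\leq 1/(4n)$ directly, and then apply the elementary Schur-complement bound $(C^{-1})_{nn}\geq 1/C_{nn}$. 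Your argument is self-contained and avoids the equivalence-theorem machinery entirely; the paper's proof, by contrast, is shorter precisely because it can lean on the already-proved $LV$-optimality. One small point worth making explicit in your write-up is why $C=N_A(\xi')$ is positive definite (so that the Schur-complement identity applies): for a feasible $\xi'$ the information matrix $N_A(\xi')=(Q^\T M_\tau^-Q)^{-1}$ exists and is nonsingular, hence strictly positive definite.
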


From numerical studies it seems that unlike the $MV$-optimal standard designs, the $MV$-optimal extended designs do not follow a regular structure that we are able to describe analytically at this time.


\section{Discussion}
The optimality results for designs of experiments are highly dependent on the chosen system of treatment contrasts of interest. The designs obtained in this paper, which are optimal for the comparison of the drug doses with the placebo, would perform worse than many of the designs proposed by \cite{Bailey09} if the aim of the experiment was to estimate the system of all pairwise comparisons.  Ultimately, the choice of the treatment contrasts of interest lies upon the experimenters. However, following \cite{Senn09}, we noted that the comparison of the drug doses with the placebo may be a relevant system of contrasts for the dose-escalation studies. Therefore, we intended to provide well-performing designs for the case when the dose effects relative to the placebo are the main focus. In such a case, it seems that the Senn designs are a reasonable choice.

Another desirable property of the Senn designs is their extremely simple and regular structure, which ensures a simple construction of exact designs from the approximate Senn designs, and, which may be even more important, the characterization of these designs for practical use. Moreover, as noted by \cite{Senn09}, the Senn designs are not 'anticipatory'. That is, due to their self-similar structure, their optimality properties hold even when the study is stopped and thus the entire design is not performed - e.g., unlike the design given in Table \ref{tblAinEopt} which is $E$-optimal only if the entire experiment is carried out. Such 'non-anticipatory' property is crucial in the $LV$-optimality of the Senn designs.

If the extra cohort is considered and the main focus is on the comparisons of the doses with the placebo, one may choose one of the two extensions of the Senn designs that are provided in this paper. These extensions have forms that are rather simple to describe, and they inherit the optimality properties of the Senn designs if the study is stopped before the entire design is carried out. Alternatively, we provided an entire class of $E$-optimal extended designs described by simple (linear) conditions, which allows one to calculate a design that is optimal in this class with respect to a secondary criterion, if some secondary criterion is deemed relevant.

\section*{Acknowledgements}
This research was supported by the VEGA 1/0521/16 grant of the Slovak Scientific Grant Agency. The research of the first author was additionally supported by the UK/214/2016 grant of the Comenius University in Bratislava. 

\section*{Appendix}

\begin{proof}[Proof of Theorem \ref{tEoptStd}]
	
	The Senn design $\xi$ satisfies $r=(1/2,1_n^\T /(2n))^\T $, $X=(1_n, I_n)^\T/(2n) $ and thus $Z=I_n/(2n)$. Note that $\xi$ is connected (see \cite{EcclestonHedayat}) and therefore feasible. Then, equation \eqref{eInfMat} yields that $N_A=I_n/(4n)$, which has the smallest eigenvalue $1/(4n)$. The proof of Theorem 6 by \cite{RosaHarman16} states that when there are no design constraints, the optimal value of the $E$-criterion is $1/[4(v-1)]$, where $v$ is the number of treatments. In our model, $v=n+1$ and thus the $E$-optimal value is $1/(4n)$, which the Senn design attains.
	
	Now, consider the converse part.
	For a feasible design $\xi$ to be $E$-optimal, it must have the  smallest eigenvalue equal to that of the Senn design, which coincides with the optimal value of $E$-optimality criterion in the model without the design constraints. Thus, such $\xi$ is $E$-optimal in the model without design constraints, and from Lemma \ref{lEopt} it follows that $\xi$ must satisfy 
	\begin{equation}\label{eEoptW}
	r_0(\xi) = 1/2, \quad r_i(\xi) = 1/(2n), \quad i=1,\ldots,n;
	\end{equation} 
	hence $r=(1/2,1_n^\T /(2n))^\T $.
	
	Observe that
	$N_A(\xi) = C = I_n/(2n) - nZZ^\T $, as defined in (\ref{eMtau}).
	Then, the smallest eigenvalue $\lambda_{\min}$ of $N_A(\xi)$ satisfies $\lambda_{\min} = 1/(2n) - n\mu_{\max}$, where $\mu_{\max}$ is the largest eigenvalue of $ZZ^\T $. Let us denote the columns of matrix $Z=(z_{ij})$ by $z_1, \ldots, z_n$. Then,
	$$
	\mu_{\max} 
	= \max_{ \lVert u \rVert =1} u^\T  ZZ^\T  u 
	= \max_{ \lVert u \rVert =1} \lVert Z^\T  u  \rVert^2 
	= \max_{ \lVert u \rVert =1} \sum_{j=1}^{n} (  z_j^\T  u )^2,
	$$
	and for the particular choice of $u=1_n/\sqrt{n}$, we obtain 
	$$
	\mu_{\max} \geq \frac{1}{n} \sum_{j=1}^{n} (  1_n^\T  z_j )^2 = \frac{1}{n} \sum_{j=1}^{n} \Big(\sum_{i=1}^n z_{ij}\Big)^2 = \frac{1}{n} \sum_{j=1}^{n} q_j^2,
	$$
	where $q_j:=\sum_{i=1}^{n}z_{ij}$, $j=1,\ldots,n$. From (i) it follows that $q_j = n^{-1} - \xi(0,j)$, and (\ref{eEoptW}) yields $\sum_j q_j = 1- 1/2 = 1/2$. Therefore, $\mu_{\max}$ is not smaller than the optimal value of the 
	following convex optimization problem:
	\begin{equation}\label{eOptEigenvalue}
	\min_{q\geq0} \frac{1}{n} \sum_{j=1}^{n} q_j^2 \quad\text{subject to}\quad
	\sum_{j=1}^{n} q_j = \frac{1}{2}.
	\end{equation}
	Because the objective function of \eqref{eOptEigenvalue} is strictly convex on the affine hyperplane formed by the equality constraints and invariant with respect to the permutations of $q_1, \ldots, q_n$, the \emph{single} optimal solution $q^*$ of \eqref{eOptEigenvalue} satisfies $q_1^* = \cdots = q_{n}^* = 1/(2n)$. Therefore, the optimal value of \eqref{eOptEigenvalue} is $1/(4n^2)$. It follows that if $q$ does not satisfy $q_1 = \cdots = q_{n} = 1/(2n)$, then $\mu_{\max} > 1/(4n^2)$. Therefore, the smallest eigenvalue $\lambda_{\min}$ of $N_A(\xi)$ for such $\xi$ satisfies $\lambda_{\min} < 1/(4n)$, and thus $\xi$ is not $E$-optimal. Hence, any $E$-optimal design $\xi$ must satisfy $q_1 = \cdots = q_n = 1/(2n)$, i.e., $\xi(0,1) = \cdots = \xi(0,n) = 1/(2n)$. 
	
	From (\ref{eEoptW}) and (i), $\xi$ needs to satisfy $r_i = 1/(2n)$ for $i>0$ and $s_k=\sum_i \xi(i,k) = 1/n$ for all $k$. Using (ii), because $\xi(n,k)$ can be non-zero only for $k=n$, we obtain $\xi(n,n) = 1/(2n)$. Since $\xi(0,n)=1/(2n)$, we have $\xi(i,n) = 0$ for $0<i<n$. Then, $\xi(n-1,k)$ is non-zero only for $k=n-1$, which yields $\xi(n-1,n-1) = 1/(2n)$, $\xi(i,n-1) = 0$ for $0<i<n-1$, etc. It follows that the single $E$-optimal standard design for given $n$ is the Senn design.
\end{proof}

\begin{proof}[Proof of Theorem \ref{tEoptExt}]
	Let $\xi$ be a design that satisfies the conditions of Theorem \ref{tEoptExt}. First, observe that $\xi$ is connected (see \cite{EcclestonHedayat}) and therefore feasible. The partitions of $M_\tau$ defined in (\ref{eMtau}) satisfy $z=1_{n+1}/(2(n+1))$, $X1_{n+1}=r=\big(1/2,1_n^\T /(2n)\big)^\T $, $Z1_{n+1} = 1_n/(2n)$, $X^\T 1_{n+1}=1_{n+1}/(n+1)$ and $Z^\T 1_n = 1_{n+1}/(2(n+1))$.
	
	Thus,
	$N_A = C = I_n/(2n) - (n+1)ZZ^\T $. Moreover,
	$N_A 1_n = 1_n/(4n),$
	i.e., $1_n$ is an eigenvector of $N_A$ with eigenvalue $\lambda_1=1/(4n)$, which is, analogously to the proof of Theorem \ref{tEoptStd}, equal to the optimal smallest eigenvalue in the model without constraints, see \cite{RosaHarman16}. Therefore, to prove that $\xi$ is $E$-optimal, it suffices to prove that $\lambda_1$ is the smallest eigenvalue of $N_A$. The smallest eigenvalue of $N_A$ satisfies $\lambda_{\min}(N_A) = 1/(2n) - (n+1)\mu_{\max}$, where $\mu_{\max}$ is the largest eigenvalue of $ZZ^\T $.
	
	Let $\hat{Z}=\big(2nZ^\T , 1_{n+1}/(n+1)\big)$. The matrix $\hat{Z}$ satisfies $\hat{Z}1_{n+1} = 1_{n+1}$ and $1_{n+1}^\T  \hat{Z} = 1_{n+1}^\T $, i.e., $\hat{Z}$ is doubly stochastic. Using the Birkhoff-von Neumann theorem, we obtain that
	$$\hat{Z} = \sum_{\pi \in \mathfrak{P}_{n+1}} \alpha_\pi P_\pi, $$
	where $\alpha_\pi \geq 0$ for all $\pi$, $\sum_\pi \alpha_\pi = 1$, $\mathfrak{P}_{n+1}$ denotes the set of all permutations of $n+1$ elements and $P_\pi$ is the permutation matrix given by $\pi$. Let us partition the matrices $P_\pi$ as $P_\pi = \big( \tilde{P}_\pi, v_\pi \big)$, where $\tilde{P}_\pi$ is an $(n+1) \times n$ matrix. Then,
	$$
	2nZ^\T  = \sum_{\pi \in \mathfrak{P}_{n+1}} \alpha_\pi \tilde{P}_\pi, \quad Z = \frac{1}{2n} \sum_{\pi \in \mathfrak{P}_{n+1}} \alpha_\pi \tilde{P}_\pi^\T, \quad \frac{1}{n+1}1_{n+1} = \sum_{\pi \in \mathfrak{P}_{n+1}} \alpha_\pi v_\pi. $$
	Let $\mathfrak{P}_{n+1}^{(i)}$ be the set of all permutations $\pi$ of $n+1$ elements, the corresponding matrices of which have their last column equal to $e_i$, i.e., $\pi(n+1)=i$. Then, because the last column of $\hat{Z}$ is $1_{n+1}/(n+1)$, we have $\sum_{\pi \in \mathfrak{P}_{n+1}^{(i)}}\alpha_\pi = 1/(n+1)$.
	
	The largest eigenvalue $\mu_{\max}$ of $ZZ^\T $ is identical to the largest eigenvalue of $Z^\T  Z$; therefore, it satisfies
	$$\begin{aligned}
	\mu_{\max} 
	&= \max_{ \lVert u \rVert =1} u^\T  Z^\T  Z u 
	= \max_{ \lVert u \rVert =1} \lVert Z u  \rVert^2 
	= \frac{1}{4n^2}  \max_{ \lVert u \rVert =1} \Big\lVert  \sum_{\pi \in \mathfrak{P}_{n+1}} \alpha_\pi \tilde{P}_\pi^\T  u  \Big\rVert^2 \\
	&= \frac{1}{4n^2} \max_{ \lVert u \rVert =1} \Big\lVert \sum_{i=1}^{n+1} \sum_{\pi \in \mathfrak{P}_{n+1}^{(i)}} \alpha_\pi \tilde{P}_\pi^\T  u  \Big\rVert^2
	\leq \frac{n+1}{4n^2} \max_{ \lVert u \rVert =1} \sum_{i=1}^{n+1} \Big\lVert \sum_{\pi \in \mathfrak{P}_{n+1}^{(i)}} \alpha_\pi \tilde{P}_\pi^\T  u  \Big\rVert^2,
	\end{aligned}$$
	where the last inequality follows from the fact that $\lVert \sum_{i=1}^{k} x_i \rVert^2 \leq k \sum_{i=1}^{k} \lVert x_i \rVert^2$ for any vectors $x_1, \ldots, x_k$.
	
	Let $i \in \{1,\ldots,n+1\}$. Then the set of the extremal points of the convex set
	$$
	\mathfrak{K}_i:=\Big\{\sum_{\pi \in \mathfrak{P}_{n+1}^{(i)}} \alpha_\pi \tilde{P}_\pi^\T  u \,\Big\vert\, \alpha_{\pi} \geq 0 \text{ for } \pi \in \mathfrak{P}_{n+1}^{(i)}, \sum_{\pi \in \mathfrak{P}_{n+1}^{(i)}} \alpha_\pi = \frac{1}{n+1}  \Big\}
	$$
	is a subset of $\{ \tilde{P}_{\pi}^T u /(n+1) \,\vert\, \pi \in \mathfrak{P}_{n+1}^{(i)} \}$, and the convex function $\lVert x \rVert^2, x \in \mathfrak{K}_i$, attains its maximum in at least one of the extremal points of $\mathfrak{K}_i$. It follows that
	$$
	\Big\lVert \sum_{\pi \in \mathfrak{P}_{n+1}^{(i)}} \alpha_\pi \tilde{P}_\pi^\T  u  \Big\rVert^2 \leq 
	\max_{ \pi_i \in \mathfrak{P}_{n+1}^{(i)} } \Big\lVert \frac{1}{n+1} \tilde{P}_{\pi_i}^\T  u  \Big\rVert^2 =
	\frac{1}{(n+1)^2} \max_{ \pi_i \in \mathfrak{P}_{n+1}^{(i)} } \lVert \tilde{P}_{\pi_i}^\T  u  \rVert^2;
	$$
	thus,
	$$
	\mu_{\max} \leq
	\frac{1}{4n^2(n+1)} \max_{ \lVert u \rVert =1} \max_{ \pi_1 \in \mathfrak{P}_{n+1}^{(1)},\ldots, \pi_{n+1} \in \mathfrak{P}_{n+1}^{(n+1)} }  \sum_{i=1}^{n+1} \lVert \tilde{P}_{\pi_i}^\T  u  \rVert^2.
	$$
	Consider $\pi_1 \in \mathfrak{P}_{n+1}^{(1)},\ldots, \pi_{n+1} \in \mathfrak{P}_{n+1}^{(n+1)}$ and $u \in \mathbb{R}^{n+1}$, $\lVert u \rVert =1$, that maximize $\sum_{i=1}^{n+1} \lVert \tilde{P}_{\pi_i}^\T  u  \rVert^2$,  and let us denote $v_i=(v_{i,1},\ldots,v_{i,n})^\T :=\tilde{P}^\T _{\pi_i} u, $ $i=1,\ldots,n+1$. Then, $\mu_{\max}$ satisfies
	$$
	\mu_{\max} 
	\leq \frac{1}{4n^2(n+1)} \sum_{i=1}^{n+1} \sum_{j=1}^n v_{i,j}^2 = \frac{1}{4n^2(n+1)} \sum_{i=1}^{n+1} S,
	$$
	where $S$ is the sum of squares of all elements of vectors $v_1,\ldots,v_{n+1}$. For each $i \in \{1,\ldots,n+1\}$, the vector $v_i=\tilde{P}_{\pi_i}^\T  u$ consists of all elements of $u$ except $u_i$; therefore, $u_i^2$ occurs $n$-times in $S$. Hence, $S= nu_1^2 + \cdots + nu_{n+1}^2 = n \lVert u \rVert^2 = n$ and it follows that $\mu_{\max} \leq (4n(n+1))^{-1}$.
	Since $\mu_{\max}\leq (4n(n+1))^{-1}$, the smallest eigenvalue of $N_A$ is at least $ 1/(2n) - (n+1)/(4n(n+1)) = 1/(4n)$ and thus $\lambda_1$ is indeed the smallest eigenvalue of $N_A$.
	
	To prove the converse part, note that similarly to Theorem \ref{tEoptStd}, from Lemma \ref{lEopt} it follows that for $\xi$ to attain the optimal value of $E$-criterion, $1/(4n)$, it must satisfy (\ref{eEoptW}). 
	Let $\xi$ be a design that satisfies (\ref{eEoptW}) but does not satisfy
	$\xi(0,k) = 1/(2(n+1))$, $k=1,\ldots,n+1.$
	Let $q_j:= 1/(n+1) - \xi(0,j)$, $j=1,\ldots,n+1$; hence, $\sum_j q_j = 1-1/2=1/2$. Then, analogously to the proof of Theorem \ref{tEoptStd}, the largest eigenvalue $\mu_{\max}$ of $ZZ^\T $ is not smaller than the optimal value of the following convex optimization problem:
	$$
	\min_{q \geq 0} \frac{1}{n} \sum_{j=1}^{n+1} q_j^2 \quad\text{subject to}\quad
	\sum_{j=1}^{n+1} q_j = \frac{1}{2},
	$$
	which has a unique optimal solution $q^*$ satisfying $q_1^* = \cdots = q_{n+1}^* = (2(n+1))^{-1}$, and the optimal value is $(4n(n+1))^{-1}$. It follows that if $q$ does not satisfy $q_1 = \cdots = q_{n+1} = (2(n+1))^{-1}$, then $\mu_{\max} > (4n(n+1))^{-1}$. Therefore, the smallest eigenvalue $\lambda_{\min}$ of $N_A$ for such $\xi$ satisfies $\lambda_{\min} < (4n)^{-1}$ and thus $\xi$ is not $E$-optimal.
\end{proof}

In the proof of Theorem \ref{tEoptcopt}, Theorems 7.21 (the General Equivalence Theorem for $E$-optimality) and 7.23 by \cite{puk} will be used.

\begin{lemma}[Theorem 7.21 from \cite{puk}]
	A feasible design $\xi$ with its moment matrix $M$ and information matrix $N_A$ is 
	$E$-optimal for estimating $A^\T \beta$ if and only if there exist a generalized inverse $G$ of $M$ and a non-negative definite matrix $E$, $\mathrm{tr}(E)=1$, such that
	\begin{equation}\label{eGETE}
		\mathrm{tr}(M(\tilde{\xi})GAN_AEN_AA^\T  G^\T  ) \leq \lambda_{\min}(N_A) \text{ for all } \tilde{\xi} \in \Xi.
	\end{equation}
\end{lemma}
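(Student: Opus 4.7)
The plan is to prove this equivalence through convex duality, using two standard ingredients. First, the variational characterization of the minimum eigenvalue,
\[ \lambda_{\min}(B) = \min_{E \succeq 0,\, \mathrm{tr}(E) = 1} \mathrm{tr}(EB), \]
valid for any symmetric $B$, with the minimum attained by any $E$ whose range lies in the $\lambda_{\min}(B)$-eigenspace. Second, the Pukelsheim information-matrix inequality: for any generalized inverse $G$ of $M(\xi)$ compatible with feasibility (so that $M(\xi) G A = A$) and any feasible competitor $\tilde\xi$,
\[ N_A(\tilde\xi) \preceq N_A(\xi)\, A^\T G^\T M(\tilde\xi)\, G A\, N_A(\xi), \]
which follows because $F := G A N_A(\xi)$ satisfies $A^\T F = I$, and for any $F$ with $A^\T F = I$ one has $(A^\T M^-(\tilde\xi) A)^{-1} \preceq F^\T M(\tilde\xi) F$.

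For the ``if'' direction, I would simply chain these two ingredients. For any feasible $\tilde\xi$, applying $\mathrm{tr}(E \,\cdot\,)$ to the information-matrix inequality and combining with the variational characterization yields
\[ \lambda_{\min}(N_A(\tilde\xi)) \leq \mathrm{tr}(E N_A(\tilde\xi)) \leq \mathrm{tr}\bigl(M(\tilde\xi)\, G A N_A E N_A A^\T G^\T\bigr) \leq \lambda_{\min}(N_A(\xi)), \]
where the final inequality is the standing hypothesis. This immediately delivers the $E$-optimality of $\xi$.

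For the ``only if'' direction, assume $\xi$ is $E$-optimal. I would choose $E$ to be any matrix with $\mathrm{tr}(E) = 1$, $E \succeq 0$, and range contained in the $\lambda_{\min}$-eigenspace of $N_A(\xi)$, and pick $G = M^-(\xi)$ with $M(\xi) G A = A$. A short computation using $A^\T G A = N_A(\xi)^{-1}$ shows that the proposed trace equals $\lambda_{\min}(N_A(\xi))$ at $\tilde\xi = \xi$. To upgrade this to the required inequality for every $\tilde\xi$, I would invoke the first-order optimality condition at $\xi$: concavity of $\xi \mapsto \lambda_{\min}(N_A(\xi))$ in $M(\xi)$ (since $M \mapsto N_A$ is Loewner-concave on the feasible cone and $\lambda_{\min}$ is concave and Loewner-isotonic) forces the directional derivative of $\lambda_{\min}(N_A(\cdot))$ at $\xi$ in direction $\tilde\xi - \xi$ to be non-positive. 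Evaluating this derivative through the chain-rule formula $dN_A/dM[\Delta] = N_A A^\T G \Delta G A N_A$ and the subdifferential of $\lambda_{\min}$ (which consists precisely of matrices $E$ of the type chosen above), together with a minimax argument that extracts a single $E$ working uniformly in $\tilde\xi$, reproduces exactly the stated trace inequality.

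The main obstacle I anticipate lies in the ``only if'' direction, specifically two issues: the non-smoothness of $\lambda_{\min}$ at matrices whose smallest eigenvalue has multiplicity greater than one (the generic situation for $E$-optimal designs, as illustrated by Lemma \ref{lEopt} above) and the ambiguity in the choice of generalized inverse $G$ when $M(\xi)$ is singular. Both difficulties are resolved by passing from classical derivatives to subdifferential calculus and by restricting attention to feasibility-compatible generalized inverses, after which the argument reduces to a minimax lemma applied to the convex-concave map $(\tilde\xi, E) \mapsto \mathrm{tr}(E N_A(\tilde\xi))$ on $\Xi$ crossed with the spectrahedron $\{E \succeq 0,\ \mathrm{tr}(E) = 1\}$. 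An alternative, arguably cleaner, route recasts $E$-optimality as the semidefinite program $\max\{\rho : N_A(\xi) \succeq \rho I,\ \xi \in \Xi\}$ and reads the stated condition directly off the Karush--Kuhn--Tucker conditions, with $E$ playing the role of the dual multiplier for the semidefinite constraint; this trades subdifferential calculus for semidefinite duality.
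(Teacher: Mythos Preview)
The paper does not prove this statement at all: it is quoted verbatim as Theorem~7.21 from Pukelsheim's book and used as a black box in the proof of Theorem~\ref{tEoptcopt}. There is therefore no ``paper's own proof'' to compare against.

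That said, your sketch is the standard route to this General Equivalence Theorem and is essentially how Pukelsheim derives it. The ``if'' direction is clean and correct as written. For the ``only if'' direction you have correctly located the real work: one must produce a \emph{single} matrix $E$ (and a single $G$) that certifies the inequality uniformly over all competitors~$\tilde\xi$, and this requires a compactness/minimax step rather than merely a pointwise first-order condition. Your appeal to Sion-type minimax on the bilinear form $(\tilde\xi,E)\mapsto \mathrm{tr}(E\,N_A A^\T G^\T M(\tilde\xi)GAN_A)$ over the compact convex spectrahedron $\{E\succeq 0,\ \mathrm{tr}E=1\}$ is the right idea; just be aware that the set $\Xi$ of competing designs need not be compact in general, so the argument is usually run on the set of moment matrices (which is convex and, after normalization, compact) rather than on $\Xi$ itself. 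The alternative SDP/KKT formulation you mention is equivalent and is arguably the most transparent way to see why $E$ appears as a Lagrange multiplier with unit trace.
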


\begin{lemma}[Theorem 7.23 from \cite{puk}]\label{lEoptcopt}
	Let $\xi$ be a feasible design for $A^\T \beta$ with its information matrix $N_A$ and let $h$, $\lVert h \rVert = 1$, be an eigenvector of $N_A$ corresponding to the smallest eigenvalue of $N_A$. Then, $\xi$ is $E$-optimal for $A^\T \beta$ and $E=hh^\T $ satisfies (\ref{eGETE}) if and only if $\xi$ is $c$-optimal, where $c=Ah$. If the smallest eigenvalue of $N_A$ has multiplicity 1, $\xi$ is $E$-optimal for $A^\T \beta$ if and only if it is $c$-optimal, where $c=Ah$.
\end{lemma}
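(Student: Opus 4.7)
The plan is to derive both equivalences from the General Equivalence Theorem for $E$-optimality (equation \eqref{eGETE}) by specializing the nonnegative-definite matrix $E$ to the rank-one choice $hh^\T$, and matching the resulting inequality with the General Equivalence Theorem for $c$-optimality with $c = Ah$.

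First I would verify that $E = hh^\T$ is admissible in \eqref{eGETE}: it is nonnegative definite and $\mathrm{tr}(hh^\T) = \lVert h \rVert^2 = 1$. Using $N_A h = \lambda_{\min}(N_A) h$, one has $N_A E N_A = \lambda_{\min}^2 hh^\T$, so the left-hand side of \eqref{eGETE} collapses to $\lambda_{\min}^2 \, c^\T G^\T M(\tilde\xi) G c$ with $c = Ah$. Hence \eqref{eGETE} with $E = hh^\T$ reads $c^\T G^\T M(\tilde\xi) G c \leq 1/\lambda_{\min}$ for all $\tilde\xi \in \Xi$. Feasibility of $\xi$ yields $A^\T M^-(\xi) A = N_A^{-1}$, so $c^\T M^-(\xi) c = h^\T N_A^{-1} h = \lambda_{\min}^{-1}$. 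The reduced inequality is then exactly the General Equivalence Theorem condition for $c$-optimality, $c^\T G^\T M(\tilde\xi) G c \leq c^\T M^-(\xi) c$ for all $\tilde\xi$, proving the first ``if and only if'' of the lemma.

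For the multiplicity-one case, the extra work is to show that any admissible $E$ witnessing the $E$-optimality of $\xi$ must equal $hh^\T$. Substituting $\tilde\xi = \xi$ into \eqref{eGETE} and using $A^\T G^\T M(\xi) G A = N_A^{-1}$ gives $\mathrm{tr}(N_A E) \leq \lambda_{\min}$, while the spectral bound $\mathrm{tr}(N_A E) \geq \lambda_{\min}\,\mathrm{tr}(E) = \lambda_{\min}$ (from $E \geq 0$ and $\mathrm{tr}(E) = 1$) forces equality. A spectral decomposition of $E$ then shows its range lies in the $\lambda_{\min}$-eigenspace of $N_A$; under the multiplicity-one assumption this eigenspace is $\mathrm{span}(h)$, forcing $E = hh^\T$. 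The first part of the lemma then delivers the desired equivalence between $E$-optimality and $c$-optimality with $c = Ah$.

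The main obstacle is careful bookkeeping with generalized inverses: the identities $A^\T G^\T M(\xi) G A = N_A^{-1}$ and the reduction of $\mathrm{tr}(M(\tilde\xi) G A N_A E N_A A^\T G^\T)$ to $\lambda_{\min}^2 c^\T G^\T M(\tilde\xi) G c$ rely on $\mathcal{C}(A) \subseteq \mathcal{C}(M(\xi))$ and on consistent use of one and the same generalized inverse $G$ across the $E$- and $c$-optimality equivalence theorems. These identities are standard in Pukelsheim's framework, but each step must be invoked explicitly to keep the link between the two equivalence theorems airtight.
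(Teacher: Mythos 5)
The paper does not prove this statement at all: it is imported verbatim as Theorem~7.23 of \cite{puk}, with Lemma~1 (Pukelsheim's Theorem~7.21) quoted immediately before it as the ingredient it rests on. So there is no in-paper argument to compare against; what you have written is a self-contained derivation, and it is correct --- indeed it is essentially how Pukelsheim himself obtains 7.23 from 7.21. Your reduction of the left-hand side of \eqref{eGETE} to $\lambda_{\min}^2\, c^\T G^\T M(\tilde\xi) G c$ via $N_A h = \lambda_{\min}h$ is right, and the identification of the bound $1/\lambda_{\min}$ with $c^\T M^-(\xi)c = h^\T N_A^{-1}h$ correctly turns \eqref{eGETE} with $E=hh^\T$ into the $c$-optimality normality inequality for the \emph{same} generalized inverse $G$, which gives both directions of the first equivalence (note $c=Ah\in\mathcal{C}(A)\subseteq\mathcal{C}(M)$, so $c^\T M^- c$ is invariant to the choice of $M^-$). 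For the multiplicity-one case, two small points deserve to be made explicit: substituting $\tilde\xi=\xi$ into \eqref{eGETE} is legitimate because $\xi$ itself belongs to the competing class, and the identity $A^\T G^\T M(\xi) G A = A^\T M^-(\xi) A = N_A^{-1}$ (valid for any generalized inverses, since $\mathcal{C}(A)\subseteq\mathcal{C}(M)$) is what collapses the trace to $\mathrm{tr}(N_A E)$. Then $\mathrm{tr}\bigl((N_A-\lambda_{\min}I)E\bigr)=0$ with both factors nonnegative definite forces $(N_A-\lambda_{\min}I)E=0$, so the range of $E$ lies in the $\lambda_{\min}$-eigenspace, and under the multiplicity-one hypothesis $E=hh^\T$ as you claim. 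With those two justifications spelled out, the argument is airtight.
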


\begin{proof}[Proof of Theorem \ref{tEoptcopt}]
	Recall that $A^T=(Q^T,0_{n \times (t+1)})$ and $Q^T=(-1_n,I_n)$.
	Let $\tilde{c}:=(-1,1_n^\T /n,0_{t+1}^\T )^\T $.
	For the (standard) Senn design, we have $N_A=I_n/(4n)$. Let us choose
	\begin{equation}\label{eGInv}
	G=\begin{bmatrix}
	M_\tau^- & - M_\tau^-M_{12}M_{22}^- \\ 
	-M_{22}^- M_{12}^\T  M_\tau^- &  M_{22}^- + M_{22}^-M_{12}^\T  M_\tau^-M_{12}M_{22}^-
	\end{bmatrix},
	\end{equation}
	where
	\begin{equation}\label{eGInvM22}
	M_\tau^- = \begin{bmatrix}
	0 & 0_n^\T  \\ 0_n & 4nI_n
	\end{bmatrix} \text{ and }
	M_{22}^- =
	\begin{bmatrix}
	0 & 0_n^\T  \\ 0_n & nI_n
	\end{bmatrix},
	\end{equation}
	and $E=1_n1_n^\T /n = hh^\T $, where $h=1_n/\sqrt{n}$. The matrix $G$ is indeed a generalized inverse of $M$, see Theorem 9.6.1 by \cite{Harville}. Then, it is possible to verify that there is equality in \eqref{eGETE} for any permissible design $\tilde{\xi}$. Therefore, Lemma \ref{lEoptcopt} implies that any $E$-optimal standard design (i.e., Senn design) is also $c$-optimal, where $c= A1_n/\sqrt{n}$, which is equivalent to $A1_n/n=:\tilde{c}$.
	
	Let $\xi$ be an $E$-optimal extended design for comparing the treatments with the placebo, with its moment matrix $M$ and information matrix $N_A$. Then, $h=1_{n}/\sqrt{n}$ is an eigenvector of $N_A$ corresponding to the smallest eigenvalue $\lambda_{\min}=1/(4n)$ (see the proof of Theorem \ref{tEoptExt}). The left-hand side of the normality inequality (\ref{eGETE}) for $E=hh^T$ and a given $\tilde{\xi} \in \Xi$ becomes
	\begin{equation}\label{eGETlhs}
		\mathrm{tr}(\tilde{M}GAN_Ahh^TN_AA^TG^T)=\lambda_{\min}^2h^TA^TG^T\tilde{M}GAh = \frac{1}{16n^3}1_n^TA^TG^T\tilde{M}GA1_n,
	\end{equation}
	where $\tilde{M}:=M(\tilde{\xi})$. Moreover, $A1_n=(-n,1_n^T,0_{n+2}^T)^T$.
	
	Recall the partitioning of $M_\tau$ and $X$ defined in \eqref{eMtau}.
	Let the generalized inverse $G$ of $M$ be given by (\ref{eGInv}), where
	$$
	M_\tau^-=\begin{bmatrix}
	0 & 0_n^T \\ 0_n & C^{-1}
	\end{bmatrix}.
	$$
	Then,
	$$GA1_n = \begin{bmatrix}
	0 \\ C^{-1}1_n \\ 0 \\ -(n+1)Z^TC^{-1}1_n
	\end{bmatrix}.
	$$
	Since $C^{-1}=N_A^{-1}$, we obtain $C^{-1}1_n = 4n1_n$. Then, $GA1_n=(0, 4n1_n^T, 0, -(n+1)4n 1_n^T Z )^T = (0, 4n1_n^T, 0, -2n1_{n+1}^T )^T$, because $Z^T1_n = 1_{n+1}/(2(n+1))$. 
	
	Let us partition $\tilde{M}=M(\tilde{\xi})$ using $\tilde{\alpha}$, $\tilde{b}$, $\tilde{z}$ and $\tilde{Z}$ as in \eqref{eMtau}, and let us denote $\tilde{r}=r(\tilde{\xi})$.
	Then, (\ref{eGETlhs}) becomes 
	$$\frac{1}{16n^3}\Big( (0,4n1_n^T)\tilde{M}_{11}\begin{pmatrix}
	0 \\ 4n1_n
	\end{pmatrix} + (0,-2n1_{n+1}^T)\tilde{M}_{22}\begin{pmatrix}
	0 \\ -2n1_{n+1}
	\end{pmatrix} + 2 (0,4n1_n^T) \tilde{M}_{12} \begin{pmatrix}
	0 \\ -2n1_{n+1}
	\end{pmatrix} \Big), $$
	which is equal to
	$$
	\frac{1}{16n^3}\Big( 16n^2 \sum_{i>0}\tilde{r}_i + 4n^2 -16n^2\sum_{i>0}\tilde{r}_i
	\Big) = \frac{1}{4n} =\lambda_{\min},
	$$
	because $\tilde{Z}1_{n+1}=(\tilde{r}_1, \ldots, \tilde{r}_n)^T$. Therefore, the left-hand side of (\ref{eGETE}) is always equal to the right-hand side. Hence, Lemma \ref{lEoptcopt} yields that $\xi$ is $\tilde{c}$-optimal.
\end{proof}

\begin{proof}[Proof of Theorem \ref{tLVopt}]
	
	Let $\xi$ be a standard design and let $k \in \{1,\ldots,n\}$.
	The variance matrix of the least-squares estimator of the contrasts of interest, $\Var_\xi(\widehat{Q^\T  \tau})$, is proportional to $N_A^{-1}(\xi)=Q^TM_\tau^-Q$.
	The variance calculated from the first $k$ cohorts under a given design $\xi$, $\Var_k(\widehat{\tau_k - \tau_0})$, can be obtained from
	a stage-$k$ 'design' $\xi^{(k)}$. Such a design is given by deleting all the trials in cohorts $k+1,\ldots,n$ from $\xi$, i.e., for any $i$: $\xi^{(k)}(i,j) = \xi(i,j)$ for $j\leq k$ and $\xi^{(k)}(i,j) = 0$ for $j>k$. Note that for $k<n$, $\xi^{(k)}$ is not a proper approximate design, because all of its elements do not sum to $1$, indicating that the trials in the future cohorts are ignored.
	
	The  stage-$k$ moment matrix $M^{(k)}$ given by $\xi^{(k)}$ satisfies
	$M_{11}^{(k)}=\diag(r_0,\ldots,r_k,0_{n-k}^\T )$,
	$$M_{12}^{(k)}=\begin{bmatrix}
	\begin{bmatrix}
	r_0 \\ \vdots \\ r_k
	\end{bmatrix}
	&
	\begin{bmatrix}
	\xi(0,1) & \xi(0,2) & \ldots & \xi(0,k) \\
	\xi(1,1) & \xi(1,2) & \ldots & \xi(1,k) \\
	0 & \xi(2,2) & \ldots & \xi(2,k) \\
	\vdots & \vdots & & \vdots \\
	0 & 0 & \ldots & \xi(k,k)
	\end{bmatrix}
	& 0_{(k+1) \times (t-k)}
	\\ 0_{n-k} & 0_{(n-k) \times k} & 0_{(n-k) \times (t-k)}
	\end{bmatrix}$$
	and
	$$M_{22}^{(k)} = \begin{bmatrix}
	1 & 1_k^\T /n & 0_{t-k}^\T  \\
	1_k/n & I_k/n & 0_{k \times (t-k)} \\
	0_{(t-k)} & 0_{(t-k) \times k} & 0_{(t-k) \times (t-k)}
	\end{bmatrix},$$
	where $t=n$, because we are considering standard designs.
	Then, the latest variance $\Var_k(\widehat{\tau_k - \tau_0})$ is proportional to $d_k(\xi):=(e_{k+1}^\T  - e_1^\T , 0_{t+1}^\T ) \big(M^{(k)}\big)^- (e_{k+1}^\T  - e_1^\T , 0_{t+1}^\T )^\T $, where $e_1$ and $e_{k+1}$ are elementary unit vectors of length $n+1$.
	
	To calculate $d_k(\xi)$, we  disregard cohorts $k+1,\ldots,n$ and we are not allowed to use doses $k+1,\ldots,n$ in the first $k$ cohorts. Such a model coincides with model (\ref{eModelDE}), where $n=k$ and a design $\xi'$ given by $\xi$ for doses $0,\ldots,k$ and cohorts $1,\ldots,k$.
	Then the latest variance under $\xi$ is proportional to the inverse of the value of the $c$-optimality criterion for $\xi'$, which is $\Phi_c(\xi') = \big( c^\T  M^-(\xi') c \big)^{-1}$ for $c^\T =(e_{n+1}^\T -e_1^\T ,0_{n+1}^\T )$. Therefore, without loss of generality, we may assume that $k=n$ and prove that the Senn design has the highest value of $c$-optimality criterion, where $c^\T =(e_{n+1}^\T -e_1^\T ,0_{n+1}^\T )$.
	
	
	
	The General Equivalence Theorem in the case of $c$-optimality becomes (Corollary 5.1 from \cite{Pukelsheim80}): Let $\mathfrak{M}$ be a set of competing moment matrices. The moment matrix $M \in \mathfrak{M}$ is $c$-optimal in $\mathfrak{M}$ if and only if there exists a generalized inverse $G$ of $M$, such that $c^\T  G^\T  BGc \leq c^\T  M^-c$ for all $B \in \mathfrak{M}$.
	
	Now, let $\xi$ be a Senn design, let $c=(e_{n+1}^\T -e_1^\T ,0_{n+1}^\T )^\T $ and let us denote $M:=M(\xi)$. Then, $r=2^{-1}\big(1,1_n^\T /n)^\T $, $X=(2n)^{-1}\big(1_n, I_n\big)^\T $,
	$$
	M_\tau = \frac{1}{4n}
	\begin{bmatrix}
	n & -1_n^\T  \\
	-1_n & I_n
	\end{bmatrix},
	\text{ and let }
	M_\tau^- = \begin{bmatrix}
	0 & 0_n^\T  \\
	0_n & 4nI_n
	\end{bmatrix}.
	$$
	Thus, $c^\T  M^- c = (e_{n+1}-e_1)^\T  M_\tau^- (e_{n+1}-e_1) = 4n$.
	
	Let $G$ be given by (\ref{eGInv}), where $M_{22}^-$ is given by (\ref{eGInvM22}).
	It follows that 
	$$Gc
	=\begin{bmatrix}
	0 & 0_n^\T  & 
	\multirow{2}{*}{$G_{12}$} \\ 
	0_n & 4nI_n  \\
	0 & 0_n^\T  & \multirow{2}{*}{$G_{22}$} \\
	0_n & -2nI_n 
	\end{bmatrix} \begin{bmatrix}
	-1 \\ 0_{n-1} \\ 1 \\ 0_{n+1}
	\end{bmatrix}
	=\begin{bmatrix}0_n \\ 4n \\ 0_n \\ -2n \end{bmatrix}$$
	and therefore, for any feasible design $\xi'$ satisfying conditions (i), (ii), we have 
	
	$$\begin{aligned}
	c^\T  G^\T  M(\xi')Gc 
	&= 
	\begin{bmatrix}
	0_n^\T  & 4n & 0_n^\T  & -2n
	\end{bmatrix}
	\begin{bmatrix}
	\diag(r') & r' & X' \\ {r'}^T & 1 & 1_n^T/n \\ {X'}^T & 1_n/n & I_n/n
	\end{bmatrix}
	\begin{bmatrix} 0_n \\ 4n \\ 0_n \\ -2n \end{bmatrix} \\
	&= (4n)^2 r_n' + (-2n)^2 \frac{1}{n} + 2 \times 4n (-2n) \xi'(n,n) = 4n,
	\end{aligned}$$
	because $\xi'(n,n) = r_n'$.
	Therefore, any design satisfying conditions (i), (ii) satisfies the desired inequality $4n \leq c^\T  M^-c = 4n$. Hence, for any $k$, the Senn design attains the minimum possible latest variance.
\end{proof}

\begin{proof}[Proof of Theorem \ref{tLVoptExt}]
	For the first $n$ latest variances, the argument is the same as in the standard design case, see the proof of Theorem \ref{tLVopt}. Optimality of $\xi$ with respect to the $(n+1)$-st latest variance can be proved by using the General Equivalence Theorem for $c$-optimality, $c=e_{n+1}-e_1$. 
	
	Design $\xi$ satisfies
	$$X=\frac{1}{2(n+1)}\begin{bmatrix}
	1_{n-1}^\T  & 1_2^\T  \\ I_{n-1} & 0_{(n-1)\times 2} \\ 0_{n-1}^\T  & 1_2^\T   
	\end{bmatrix},\quad
	r=\begin{bmatrix}
	\frac{1}{2} \\ \frac{1}{2(n+1)}1_{n-1} \\ \frac{1}{n+1}
	\end{bmatrix}.$$
	Then,
	$$M_\tau 
	=\begin{bmatrix}
	1/4 & -\frac{1}{4(n+1)}1_{n-1}^\T  & -\frac{1}{2(n+1)} \\
	-\frac{1}{4(n+1)}1_{n-1} & \frac{1}{4(n+1)}I_{n-1} & 0_{n-1} \\
	-\frac{1}{2(n+1)} & 0_{n-1}^\T  & \frac{1}{2(n+1)}
	\end{bmatrix}
	$$
	and
	$$M_\tau^-
	=2(n+1)\begin{bmatrix}
	0 & 0_{n-1}^\T  & 0 \\ 0_{n-1} & 2I_{n-1} & 0_{n-1} \\ 0 & 0_{n-1}^\T  & 1
	\end{bmatrix}$$
	is a generalized inverse of $M_\tau$.
	Hence, $c^\T  M^- c =2(n+1)$. Furthermore,
	$$
	M_{22}^- = \begin{bmatrix}
	0 & 0_{n+1}^\T  \\ 0_{n+1} & (n+1)I_{n+1}
	\end{bmatrix}
	$$
	is a generalized inverse of $M_{22}$.
	By choosing the generalized inverse $G$
	of $M$ given by (\ref{eGInv}), after some calculations similar to the proof of Theorem \ref{tLVopt}, the normality inequality becomes $c^\T G^\T M(\xi')Gc = 2(n+1) \leq c^\T  M^-c = 2(n+1)$ for any design $\xi'$ satisfying (i), (ii).
\end{proof}

\begin{proof}[Proof of Theorem \ref{tMVopt}]
	Let $\xi$ be a standard design and let us denote $v_i(\xi):=\Var_\xi(\widehat{\tau_i-\tau_0})$, $i=1,\ldots,n$. Then, $\Psi(\xi) = \max_i v_i(\xi)$.
	Theorem \ref{tLVopt} states that the Senn design $\xi_S$ is $LV$-optimal, i.e., $d_k(\xi) \geq d_k(\xi_S)$ for all $k=1,\ldots,n$. If $k=n$, then the latest variance and the final variance (the variance when the entire design is carried out) coincide. It follows that $v_n(\xi) \geq v_n(\xi_S)$.
	
	In the proof of Theorem \ref{tEoptStd}, we can observe that $Q^\T M_\tau^-(\xi_S)Q = 4nI_n$. It follows that $v_1(\xi_S) = \ldots = v_n(\xi_S) \approx 4n$ and thus $\Psi(\xi_S) = v_n(\xi_S)$. Because $v_n(\xi) \geq v_n(\xi_S)$, we obtain $\Psi(\xi) \geq v_n(\xi) \geq \Psi(\xi_S)$.
\end{proof}

\bibliographystyle{plainnat}
\bibliography{C:/BibTeX/rosa.bib}

\end{document}